\documentclass{amsart}

\usepackage{url}
\usepackage{xcolor}
\definecolor{darkgreen}{rgb}{0,0.5,0}

\usepackage[T1]{fontenc}
\usepackage{amsmath,amsfonts,amssymb,amsthm}
\usepackage{mathtools}
\usepackage{comment}
\usepackage{pgf,tikz}
\usetikzlibrary{matrix,arrows,calc}
\usepackage{tikz-cd}
\usepackage{enumitem}
\usepackage[colorinlistoftodos]{todonotes}

\usepackage[
	backend=bibtex,
	style=alphabetic,
	minalphanames=3,
	maxnames=99,
	maxalphanames=4,
	giveninits=true,
	isbn=false,
	doi=false,
]{biblatex}
\addbibresource{literature.bib}

\usepackage[
colorlinks, citecolor=darkgreen,
]{hyperref}
\usepackage{cleveref}

\setlength{\parindent}{0mm}
\setlength{\parskip}{1ex plus 0.5ex minus 0.5ex}
\addtolength{\hoffset}{-1cm}
\addtolength{\textwidth}{2cm}
\addtolength{\voffset}{-1cm}
\addtolength{\textheight}{1cm}

\numberwithin{equation}{section}

\newtheorem{thmABC}{Theorem}

\newtheorem{thm}{Theorem}

\newtheorem{lemma}[thm]{Lemma}
\newtheorem{conjecture}[thm]{Conjecture}

\theoremstyle{remark}
\newtheorem{rem}[thm]{Remark}

\newtheorem{example}[thm]{Example}

\theoremstyle{definition}
\newtheorem{defn}[thm]{Definition}

\numberwithin{thm}{section}

\newcommand{\BK}{\mathrm{BK}}
\newcommand{\bC}{\mathbb C}

\newcommand{\bG}{\mathbb G}
\newcommand{\glob}{\mathrm{glob}}
\newcommand{\loc}{\mathrm{loc}}

\newcommand{\rH}{\mathrm H}
\newcommand{\HS}{\mathrm{HS}}

\newcommand{\bN}{\mathbb N}

\newcommand{\bP}{\mathbb P}

\newcommand{\bQ}{\mathbb Q}
\newcommand{\Qbar}{{\overline{\mathbb Q}}}
\newcommand{\bZ}{\mathbb Z}
\newcommand{\cO}{\mathcal{O}}
\newcommand{\cX}{\mathcal{X}}
\newcommand{\cY}{\mathcal{Y}}
\newcommand{\bR}{\mathbb{R}}
\newcommand{\fS}{\mathfrak{S}}

\newcommand{\one}{\mathbf{1}}

\newcommand{\Bcris}{\mathrm{B_{cris}}}

\newcommand{\lto}{\longrightarrow}

\DeclareMathOperator{\ab}{{\mathrm{ab}}}

\DeclareMathOperator{\cD}{\mathcal{D}}

\DeclareMathOperator{\bF}{\mathbb{F}}
\DeclareMathOperator{\Gal}{Gal}

\DeclareMathOperator{\gr}{gr}

\DeclareMathOperator{\res}{res}
\DeclareMathOperator{\Jac}{Jac}
\DeclareMathOperator{\Li}{Li}

\DeclareMathOperator{\NS}{NS}

\DeclareMathOperator{\Hom}{Hom}

\DeclareMathOperator{\rank}{\mathrm{rk}}

\DeclareMathOperator{\Sel}{Sel}
\DeclareMathOperator{\Spec}{Spec}

\DeclareMathOperator*{\colim}{co{\lim}}

\DeclareFontFamily{U}{wncy}{}
\DeclareFontShape{U}{wncy}{m}{n}{<->wncyr10}{}
\DeclareSymbolFont{mcy}{U}{wncy}{m}{n}
\DeclareMathSymbol{\Sha}{\mathord}{mcy}{"58}

\title{Linear and Quadratic Chabauty for affine hyperbolic curves}
\author{Marius Leonhardt}
\address{Marius Leonhardt,
	Mathematisches Institut,
	Ruprecht-Karls-Universität Heidelberg,
	Im Neuenheimer Feld 205,
	69120 Heidelberg,
	Germany}
\email{mleonhardt@mathi.uni-heidelberg.de}

\author{Martin Lüdtke}
\address{Martin Lüdtke,
  Bernoulli Institute, 
  University of Groningen,
  Nijenborgh 9,
  9747 AG Groningen,
  The Netherlands
}
\email{m.w.ludtke@rug.nl}

\author{J. Steffen M\"uller}
\address{ J. Steffen M\"uller,
  Bernoulli Institute, 
  University of Groningen,
  Nijenborgh 9,
  9747 AG Groningen,
  The Netherlands
}
\email{steffen.muller@rug.nl}

\begin{document}

\thispagestyle{empty}

\begin{abstract} 
	We give sufficient conditions for finiteness of linear and
        quadratic refined Chabauty--Kim loci 
        of affine hyperbolic curves. 
	We achieve this by constructing depth $\leq 2$ quotients of the
        fundamental group, following a construction of Balakrishnan--Dogra in the projective case. 
	We also apply Betts' machinery of weight filtrations to give
        unconditional explicit upper bounds on the number of $S$-integral points 
        when our conditions are satisfied.
\end{abstract}

\maketitle

\section{Introduction}
\label{sec: introduction}

Let $Y/\bQ$ be a smooth affine hyperbolic curve and let $\cY/\bZ_S$ be a
regular model of $Y$ over the ring of $S$-integers for some finite set of primes~$S$. By the theorems of Siegel and Faltings, the set of $S$-integral points $\cY(\bZ_S)$ is finite. 
However, this result is in general not effective.
One approach towards effectivity is the method of
Chabauty--Coleman~\cite{Cha41, Col85} and its nonabelian generalisation due
to
Minhyong Kim~\cite{Kim05, Kim09}, by which $\cY(\bZ_S)$ is regarded as a subset of the $p$-adic integral points $\cY(\bZ_p)$ for some prime~$p \not\in S$ of good reduction, and $p$-adic analytic functions on $\cY(\bZ_p)$ are constructed that vanish on $\cY(\bZ_S)$. More precisely, we are interested in the refined Chabauty--Kim method, as developed by Betts--Dogra \cite{BD:refined}, which produces a descending sequence of subsets
\[ \cY(\bZ_p) \supseteq \cY(\bZ_p)_{S,1} \supseteq \cY(\bZ_p)_{S,2} \supseteq \ldots, \]
all containing $\cY(\bZ_S)$. We call these the \emph{refined
Chabauty--Kim loci}.\footnote{The refined Chabauty--Kim loci
$\cY(\bZ_p)_{S,n}$ are denoted $\cY(\bZ_p)_{S,n}^{\min}$ in
\cite{BD:refined} to distinguish them from the non-refined, possibly larger
Chabauty--Kim loci. In this paper we only consider the refined variant,
therefore we omit the superscript $(-)^{\min}$ from the notation.} It is
conjectured that $\cY(\bZ_p)_{S,n}$ is finite for sufficiently large $n \gg
0$, in which case it is given as the vanishing set of 
nontrivial Coleman functions. (In fact, we expect $\cY(\bZ_p)_{S,n}$ to be
equal to $\cY(\bZ_S)$ for sufficiently large~$n$; this is the refined
version of \emph{Kim's conjecture} \cite[Conjecture~3.1
]{BDCKW}.) Thus, computing the set $\cY(\bZ_p)_{S,n}$, whenever it is
finite, can serve as an approximation to computing the set of $S$-integral points.

In general, the refined Chabauty--Kim loci $\cY(\bZ_p)_{S,n}$ are 
difficult to compute. 
Significant progress has only been made in the cases $n = 1$ and $n = 2$,
which correspond to \emph{linear} and \emph{quadratic Chabauty},
respectively. 
In this paper, we give sufficient criteria on $(Y,S,p)$ for
finiteness of 
$\cY(\bZ_p)_{S,1}$ (Theorem \ref{thm: alpha}\ref{thm: depth 1 finiteness}) and
$\cY(\bZ_p)_{S,2}$ (Theorems \ref{thm: alpha}\ref{thm: depth 2 finiteness} and \ref{full weight -2 finiteness}).
In addition, 
we obtain bounds on the size of the quadratic Chabauty--Kim locus $\cY(\bZ_p)_{S,2}$, which also bound $\#\cY(\bZ_S)$. Our results (Theorems \ref{thm: beta} and \ref{full weight -2
coleman function}) in this direction have the form that whenever a certain
inequality holds, then $\#\cY(\bZ_p)_{S,2}$ is bounded in terms of  invariants
associated to $(\cY, S, p)$ that can often be computed
explicitly.

Our theorems are affine analogues of the following
results for the set of rational points on \emph{projective} hyperbolic
curves: The classical
theorem of Chabauty~\cite{Cha41} proves finiteness 
whenever the rank-genus inequality $g-r > 0$ is satisfied. More recently,
Balakrishnan and Dogra showed finiteness
whenever the inequality $g -r + \rho - 1 >0$ involving the Picard
number~$\rho$ of the Jacobian is satisfied by developing and applying 
quadratic Chabauty (see~\cite[Lemma~3.2]{BD18}, strengthened
in~\cite[Proposition~2.2]{BD-II}). They also proved
an effective version~\cite[Theorem~1.1]{BD19} giving a bound on the number of rational points. 
(For different approaches to quadratic Chabauty, see~\cite{EL21}
and~\cite{BMS21}.)

Previous finiteness results for Chabauty--Kim in depth $\le 2$ for affine hyperbolic
curves of genus $>0$ were restricted to $S=\varnothing$ (see~\cite{Kim10}
and~\cite[Remark~3.3]{BD18}), and bounds for such curves are
only known for $S=\varnothing$ and $Y$ hyperelliptic
(see~\cite[Theorem~1.3]{BD19}).

We illustrate   our results with several special cases and examples in Section~\ref{sec: examples}: 

\begin{itemize}
	\item the rank equals genus case (\Cref{rem: rank equals genus});
	\item totally ramified superelliptic curves (\Cref{ex:hyp-odd});
	\item even degree hyperelliptic curves (\Cref{ex:hyp-even});
	\item the thrice-punctured line (\Cref{ex: thrice-punctured line}).
\end{itemize}

In order to state our main results precisely, we introduce some notation.
Suppose that the smooth affine hyperbolic curve~$Y/\bQ$ is given as $X
\setminus D$ where $X/\bQ$ is a smooth projective curve and $D \neq \varnothing$
is the reduced boundary divisor. We call the points in $D$ \emph{cusps} or \emph{points at infinity}. Let~$\cX$ be a regular model of $X$ \cite[Definition~10.1.1]{liu2006algebraic} over the ring~$\bZ_S$ of $S$-integers. Let $\cD$ be the closure of~$D$ in~$\cX$ and set $\cY \coloneqq \cX \setminus \cD$. Assume that $\cY$ admits an $S$-integral point.
Fix a prime~$p \not\in S$ 
such that $\cX_{\bF_p}$ is smooth and $\cD_{\bF_p}$ is étale. We use the following notation, which will be kept throughout this paper:
\begin{itemize}
	\item $r \coloneqq \rank \Jac_X(\bQ)$ the Mordell--Weil rank of the Jacobian of~$X$;
	\item $r_p \coloneqq \rank_{\bZ_p} \Sel_{p^\infty}(\Jac_X)$ the $p^\infty$-Selmer rank of the Jacobian of~$X$;
	\item $g$ the genus of~$X$;
	\item $\rho \coloneqq \rank \NS(\Jac_X)$ the Picard number of the Jacobian of~$X$; note that $\rho \geq 1$ if $g\geq 1$;
	\item $\rho_f \coloneqq \rho + \rank \NS(\Jac_{X_{\Qbar}})^{\sigma=-1}$, where $\sigma$ denotes complex conjugation and $A^{\sigma=\pm 1}$ denotes the $\pm 1$-eigenspace of a $\langle \sigma \rangle$-module $A$;
	\item $\#|D| > 0$ the number of closed points at infinity;
	\item $n \coloneqq \#D(\Qbar)>0$ the number of geometric points at infinity;
	\item write $n = n_1 + 2n_2$ with $n_1 \coloneqq \#D(\bR)$ the number of real points and $n_2$ the number of conjugate pairs of complex points at infinity;
	\item $b \coloneqq \#|D| +n_2-1$;
        \item $s\coloneqq \#S$.
\end{itemize}

The condition that~$Y$ is hyperbolic is equivalent to $2 - 2g - n < 0$.
Note also that $D \neq \varnothing$ and thus $b \geq 0$ since we are
assuming~$Y$ to be affine. Our first finiteness theorem 
for the linear and quadratic Chabauty--Kim loci now 
reads as follows. 

\begin{thmABC}\label{thm: alpha}
	\leavevmode
	\begin{enumerate}[label=(\arabic*)]
		\item\label{thm: depth 1 finiteness} 
		If 
		\[ \alpha_1(Y, s, p) \coloneqq g - r_p + b - s > 0, \]
		then $\cY(\bZ_p)_{S,1}$ is finite.
		\item\label{thm: depth 2 finiteness}
		If 
		\[ \alpha_2(Y, s, p) \coloneqq \alpha_1(Y,s,p) + \rho_f > 0, \]
		then $\cY(\bZ_p)_{S,2}$ is finite.
	\end{enumerate}
\end{thmABC}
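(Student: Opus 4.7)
The general strategy is the standard Chabauty--Kim dimension comparison. For any Galois-stable finite-dimensional quotient~$U$ of the $\bQ_p$-pro-unipotent \'etale fundamental group of $Y_{\Qbar}$, the refined Chabauty--Kim locus $\cY(\bZ_p)_{S,U}$ is by definition the preimage under the local unipotent Kummer map $\kappa_p\colon \cY(\bZ_p)\to H^1_f(G_p,U)$ of the image of the refined Selmer scheme $\mathrm{Sel}^{\min}_{S,U}$ under localization at~$p$. Since $\kappa_p$ is $p$-adic analytic with Zariski-dense image, whenever $\dim\mathrm{Sel}^{\min}_{S,U}<\dim H^1_f(G_p,U)$ the localization map is not dominant, and a defining equation of the closure of its image pulls back to a nontrivial Coleman function on $\cY(\bZ_p)$ vanishing on $\cY(\bZ_p)_{S,U}\supseteq \cY(\bZ_p)_{S,n}$; finiteness then follows residue disc by residue disc. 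The proof therefore reduces to estimating these two dimensions for appropriate choices of~$U$.

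\textbf{Depth~1.} For $U=V\coloneqq H_1^{\et}(Y_{\Qbar},\bQ_p)$, the short exact sequence $0\to W\to V\to V_X\to 0$, with $V_X=H_1^{\et}(X_{\Qbar},\bQ_p)$ pure of weight~$-1$ and $W$ the cuspidal weight-$-2$ piece built from Tate twists associated to the closed points of~$D$, splits the computation into two pieces. Bloch--Kato applied to~$V_X$ yields $\dim H^1_f(G_p,V_X)=g$ and $\dim\mathrm{Sel}^{\min}_{S,V_X}=r_p$ by definition of the $p^\infty$-Selmer rank. For~$W$, Bloch--Kato and Shapiro's lemma, combined with Dirichlet's unit theorem applied over the residue fields of the cusps and with the $S$-refinement (which cuts the global cuspidal Selmer down to the image of the local Kummer maps at each $\ell\in S$), yields $\dim H^1_f(G_p,W)-\dim\mathrm{Sel}^{\min}_{S,W}=b-s$; the archimedean combinatorics encoded in $b=\#|D|+n_2-1$ arises from the signature of the residue fields of the cusps. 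Summing gives $\alpha_1=(g-r_p)+(b-s)>0$, proving part~\ref{thm: depth 1 finiteness}.

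\textbf{Depth~2.} For part~\ref{thm: depth 2 finiteness} I would construct a depth-$\le 2$ Galois-stable quotient $U_2^{\mathrm{BD}}$ of the pro-unipotent fundamental group, extending $V$ by $\rho_f$ extra copies of~$\bQ_p(1)$ in weight~$-2$. These $\rho_f$ new directions are produced from Néron--Severi classes on~$\Jac_X$, together with their complex-conjugate-antisymmetric companions that account for the $\rho_f-\rho$ correction, following the Balakrishnan--Dogra construction in the projective case~\cite[Lem.~3.2]{BD18}: one extracts the classes from a suitable Galois-invariant subspace of $\wedge^2 V_X$ and quotients $U_2$ by its annihilator to obtain a central extension of~$V$ by~$\bQ_p(1)^{\rho_f}$. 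By Bloch--Kato, each extra $\bQ_p(1)$-direction contributes $+1$ to $\dim H^1_f(G_p,U_2^{\mathrm{BD}})$, while the key ``quadratic Chabauty'' input is that each Néron--Severi class imposes a nontrivial linear relation on the refined Selmer scheme rather than a new free parameter, reflecting the motivic vanishing of the relevant $\Ext^1$-groups at weight~$-2$. Combined with the depth-$1$ computation this gives $\dim H^1_f(G_p,U_2^{\mathrm{BD}})-\dim\mathrm{Sel}^{\min}_{S,U_2^{\mathrm{BD}}}\ge\alpha_1+\rho_f=\alpha_2$, and part~\ref{thm: depth 2 finiteness} follows by the general mechanism.

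\textbf{Main obstacle.} I expect the hardest step to be the explicit construction of $U_2^{\mathrm{BD}}$ in the affine setting, where the weight-$-2$ part of the fundamental group already contains the cuspidal piece~$W$. One must verify that the Néron--Severi-induced extensions are linearly independent of the cuspidal ones, and that the $S$-refinement cuts the cuspidal and the new Néron--Severi directions in a compatible way without producing cross-interactions that spoil the dimension count. Betts' weight-filtration formalism~\cite{betts:effective} combined with the local framework of Betts--Dogra~\cite{BD:refined} should make this bookkeeping tractable, and it is also what will be needed in the effective bounds of Theorems~\ref{thm: depth 1 finiteness} and~\ref{thm: depth 2 finiteness}.
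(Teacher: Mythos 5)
Your depth-1 argument is essentially the paper's: the same splitting of $U_Y^{\ab}$ into $V_p\Jac_X$ in weight~$-1$ and a cuspidal piece in weight~$-2$, with Bloch--Kato giving $g-r_p$, Dirichlet's unit theorem over the residue fields of the cusps giving $b$, and the local conditions at the primes of~$S$ contributing the~$-s$. That part is fine.

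The depth-2 step has a genuine gap. You propose a central extension of $V$ by $\rho_f$ copies of $\bQ_p(1)$, obtained from N\'eron--Severi classes ``together with their complex-conjugate-antisymmetric companions''. But the multiplicity of $\bQ_p(1)$ as a $G_{\bQ}$-equivariant quotient of $\bigwedge^2 V_p\Jac_X$ is $\dim\Hom_{G_{\bQ}}(\bigwedge^2 V_p\Jac_X,\bQ_p(1))=\rho$, not $\rho_f$: the classes in $\NS(\Jac_{X_{\Qbar}})^{\sigma=-1}$ are not Galois-invariant and do not furnish maps to $\bQ_p(1)$ over~$\bQ$, so the quotient $U_2^{\mathrm{BD}}$ you describe does not exist in general. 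Carrying out your construction with the copies of $\bQ_p(1)$ that do exist yields only the weaker criterion $\alpha_1+\rho>0$; this is exactly the coarser quotient discussed in the paper's final remark of \Cref{sec: depth 2}, and corresponds to \cite[Lemma~3.2]{BD18} rather than its strengthening \cite[Proposition~2.2]{BD-II}.

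To gain the full $\rho_f$, the paper instead pushes out the weight $\geq-2$ quotient along the surjection $\bigwedge^2 V_p\Jac_X\twoheadrightarrow(\bQ_p\otimes\NS(\Jac_{X_{\Qbar}}))^\vee(1)$, an Artin--Tate representation $W$ of dimension $\rank\NS(\Jac_{X_{\Qbar}})$, which is in general larger than $\rho_f$ and is not a sum of copies of $\bQ_p(1)$. The gain of $\rho_f$ then comes not from counting Tate twists but from the Selmer computation for~$W$: one has $\dim\rH^1_f(G_p,W)=\dim W$ while $\dim\rH^1_f(G_{\bQ},W)=\dim W-\rho-\rank\NS(\Jac_{X_{\Qbar}})^{\sigma=-1}$, where the $-\rho$ arises from the term $\rH^0(G_{\bQ},W^\vee(1))$ in the Poitou--Tate Euler characteristic formula and the remaining correction from the archimedean term $\dim W^{\sigma=1}$. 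In particular $\rH^1_f(G_{\bQ},W)$ does \emph{not} vanish in general, contrary to your heuristic that each N\'eron--Severi direction imposes a relation rather than a free parameter; its dimension $\dim W-\rho_f$ is typically positive. Establishing these formulas requires the inflation--restriction statement for Bloch--Kato Selmer groups and Dirichlet's unit theorem over a splitting field of the Artin representation (\Cref{Selmer inf-res} and \Cref{Selmer dim AT} of the paper), none of which your sketch supplies.
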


If one assumes the Tate--Shafarevich conjecture, the $p^\infty$-Selmer
rank~$r_p$ appearing in $\alpha_1(Y,s,p)$ and $\alpha_2(Y,s,p)$ can be replaced with the Mordell--Weil rank~$r$. There is also an unconditional variant of \Cref{thm: alpha} using~$r$ instead of~$r_p$; see \Cref{rem: r versus rp} below.

Once we have finiteness of the Chabauty--Kim loci, we know that they are  defined by
one or more Coleman functions. Under suitable assumptions it is
possible to get some control over these Coleman functions. This
allows us to bound their number of zeros and hence the size
of~$\cY(\bZ_S)$. The kind of control we are looking for is a bound on the
\emph{weight} of the Coleman functions, a notion introduced by Betts \cite{betts:effective}. 

To state our main results in this direction, we consider the decomposition of the $S$-integral points
\[ \cY(\bZ_S) = \coprod_{\Sigma} \cY(\bZ_S)_\Sigma \]
into points of given \emph{reduction types}~$\Sigma$
(see \cite[§6.2]{betts:effective}),
which control the mod-$\ell$ reduction for all primes~$\ell$. The refined Chabauty--Kim loci $\cY(\bZ_p)_{S,n}$ are similarly a union over reduction types
\[ \cY(\bZ_p)_{S,n} = \bigcup_{\Sigma} \cY(\bZ_p)_{S,n,\Sigma}, \]
with $\cY(\bZ_p)_{S,n,\Sigma}$ containing $\cY(\bZ_S)_\Sigma$.
For a prime $\ell$, we denote by 
$n_{\ell}$  
the number of irreducible components of the mod-$\ell$
special fibre of~$\cX$ (if $\ell \not\in S$), respectively of the minimal
regular normal crossings model of~$(X,D)$ over~$\bZ_{\ell}$ 
(if $\ell \in S$;
see~\cite[Appendix~B]{Bet18}).\footnote{The symbol $n_2$ has two different
	meanings: the number of conjugate pairs of complex points at infinity
	and the number of components of the mod-$2$ special fibre. 
	This should not cause any confusion as the correct meaning will always be clear from the context.}
Set $\kappa_p \coloneqq 1 + \frac{p-1}{(p-2) \log(p)}$ if~$p$ is odd and $\kappa_2 \coloneqq 2 + \frac{2}{\log(2)}$.

\begin{thmABC}\label{thm: beta}
	If 
	\[ \beta(Y, s, p) \coloneqq \frac12 g(g+3) - \frac12 r_p(r_p+3) + \rho_f +
	b -s > 0, \]
	then for each reduction type~$\Sigma$ there exists a nonzero Coleman algebraic function of weight at most~$2$ vanishing on~$\cY(\bZ_p)_{S,2,\Sigma}$. Moreover, the size of the refined Chabauty--Kim locus $\cY(\bZ_p)_{S,2}$ (and thus the number of $S$-integral points of~$\cY$) is bounded by
		\[ \# \cY(\bZ_p)_{S,2} \leq \kappa_p \cdot \prod_{\ell \in S} (n_{\ell} +
		n) \cdot \prod_{\ell \not\in S} n_{\ell} \cdot \# \cY(\bF_p)
		\cdot (4g+2n-2)^2 (g+1)\,. \]
\end{thmABC}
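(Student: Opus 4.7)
The strategy adapts \cite[Theorem~1.1]{BD19} to the affine, refined setting, combined with the weight-filtration machinery of \cite{betts:effective}. Let $U = U^{(2)}$ denote the depth-$\leq 2$ quotient of the $\bQ_p$-pro-unipotent étale fundamental group of $Y$ constructed earlier in the paper (as the affine analogue of the Balakrishnan--Dogra quotient). For each reduction type $\Sigma$, the refined Chabauty--Kim method produces a commutative square
\[
\begin{tikzcd}
\cY(\bZ_S)_\Sigma \ar[r] \ar[d, "j^{\glob}"'] & \cY(\bZ_p) \ar[d, "j^{\loc}"] \\
\Sel_U^{\min,\Sigma} \ar[r, "\loc_p"] & (U^{\dR}/F^0)(\bQ_p),
\end{tikzcd}
\]
in which, by definition, $\cY(\bZ_p)_{S,2,\Sigma} = (j^{\loc})^{-1}\bigl(\loc_p(\Sel_U^{\min,\Sigma})\bigr)$.

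The first step is a dimension count for the two spaces on the bottom row, using the weight filtration on $U$. From the weight-graded analysis of $U$ carried out earlier in the paper (following \cite{BD18}), one obtains $\dim(U^{\dR}/F^0) = \tfrac{1}{2}g(g+3) + \rho_f + b$: the term $g$ comes from the depth-$1$ (abelian) piece, $\tfrac{1}{2}g(g+1)$ from the symmetric-square contribution in depth $2$, $\rho_f$ from the Néron--Severi correction that survives on the de Rham side, and $b$ from the cusp contributions. A parallel analysis of the refined Selmer scheme (already used in Theorem~A\ref{thm: depth 2 finiteness}) yields the upper bound $\dim \Sel_U^{\min,\Sigma} \leq \tfrac{1}{2}r_p(r_p+3) + s$. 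The difference of these two dimensions is exactly $\beta(Y,s,p)$, so the hypothesis $\beta > 0$ forces the Zariski closure of $\loc_p(\Sel_U^{\min,\Sigma})$ in $U^{\dR}/F^0$ to be a proper closed subvariety. Pulling back any defining equation through $j^{\loc}$ gives a nonzero Coleman algebraic function $f$ vanishing on $\cY(\bZ_p)_{S,2,\Sigma}$, and since the coordinate ring of $U^{\dR}/F^0$ is generated in weight $\leq 2$, the function $f$ has weight $\leq 2$. This establishes~(1).

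For (2), I would bound the zeros of $f$ on each residue disk of $\cY(\bZ_p)$ by a Newton-polygon argument, which is precisely the input to the definition of $\kappa_p$ (cf. \cite[§4]{betts:effective}): a nonzero Coleman algebraic function of weight $\leq 2$ built from a basis of $\rH^1_{\dR}(Y)$ of dimension $2g+n-1$ has at most $\kappa_p \cdot (4g+2n-2)^2 (g+1)$ zeros on a residue disk (the $(4g+2n-2)^2$ accounting for the length-$2$ iterated-integral part and $(g+1)$ for the symmetric-square algebraic part). Summing over the $\#\cY(\bF_p)$ residue disks gives a bound on $\#\cY(\bZ_p)_{S,2,\Sigma}$ per reduction type. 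Finally, the number of reduction types $\Sigma$ is bounded by $\prod_{\ell \in S}(n_\ell + n)\prod_{\ell \notin S} n_\ell$ (see \cite[Appendix~B]{Bet18}), and multiplying produces the stated estimate.

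The main obstacle is the global dimension bound $\dim \Sel_U^{\min,\Sigma} \leq \tfrac{1}{2}r_p(r_p+3) + s$: this requires careful bookkeeping across the weight filtration on $U$, tracking how the $p^\infty$-Selmer rank $r_p$ controls the abelian piece, how its symmetric-square contributes $\tfrac{1}{2}r_p(r_p+1)$ to the depth-$2$ graded piece (with the Néron--Severi correction cancelling on the global side thanks to the refined conditions, but not on the local side where it survives as $\rho_f$), and how the ramification conditions at primes in $S$ add the term $s$. The remainder of the argument is a reasonably direct adaptation of the projective case.
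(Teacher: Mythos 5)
Your overall architecture (use the abelian-by-Artin--Tate quotient $U$, compare a global against a local count, deduce a weight-$\leq 2$ function, bound its zeros via $\kappa_p$, and multiply by the number of reduction types) matches the paper's, and your final numerology is arranged to come out to $\beta$. But the middle step, where the weight-$\leq 2$ function is actually produced, has two genuine gaps.

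First, your dimension formulas are not the dimensions of the schemes you name. For the quotient $U$ of \Cref{depth 2 quotient}, the local Selmer scheme $\rH^1_f(G_p,U)$ has dimension $\sum_k \dim\rH^1_f(G_p,\gr^W_{-k}U) = g + \dim W + n - 1$ with $W=(\bQ_p\otimes\NS(\Jac_{X_{\Qbar}}))^\vee(1)$, and $\dim\Sel_{\Sigma,U}\leq r_p + (n_1+n_2-\#|D|+\dim W-\rho_f) + s$; comparing \emph{these} is what yields the finiteness criterion $\alpha_2(Y,s,p)>0$ of \Cref{thm: alpha}\ref{thm: depth 2 finiteness}, not $\beta(Y,s,p)>0$. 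The quadratic terms $\tfrac12 g(g+3)$ and $\tfrac12 r_p(r_p+3)$ do not arise as the dimension of $U^{\dR}/F^0$ or of a symmetric-square graded piece of $U$ --- the weight $-2$ piece of this $U$ is Artin--Tate and contains no $\mathrm{Sym}^2$ or $\bigwedge^2$ of $V_p\Jac_X$. They arise one level up, as dimensions of the weight-$\leq 2$ filtered pieces of the \emph{coordinate rings}: expanding $(1-t)^{-g}$ in $\HS_{\loc}$ contributes $\tfrac12 g(g+1)$ weight-$2$ monomials in the $g$ weight-$1$ coordinates, and likewise $(1-t)^{-r_p}$ contributes $\tfrac12 r_p(r_p+1)$ on the global side.

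Second, and more seriously, the inference ``the closure of $\loc_p(\Sel_{\Sigma,U})$ is a proper closed subscheme, and the coordinate ring of the target is generated in weight $\leq 2$, hence some nonzero element of the vanishing ideal has weight $\leq 2$'' is invalid. Generation in low weight says nothing about where the ideal of a proper subscheme first meets the weight filtration: a nonzero function killing the image could a priori be a high-degree polynomial in the weight-$\leq 2$ generators and hence of large weight. To produce a function of weight $\leq 2$ one must show $\dim W_2\cO(\Sel_{\Sigma,U}) < \dim W_2\cO(\rH^1_f(G_p,U))$, which is exactly the Hilbert-series coefficient inequality $\sum_{i=0}^2 c_i^{\glob}<\sum_{i=0}^2 c_i^{\loc}$ of \Cref{lem: coleman function from Hilbert series}; unwinding that inequality is what forces the condition $\beta(Y,s,p)>0$ with its quadratic terms. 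Your part (2) is essentially the content of \cite[Theorem~6.2.1\,B)]{betts:effective} and is fine as a citation once the weight-$\leq 2$ function exists, but as written your argument only establishes non-dominance of $\loc_p$, i.e.\ finiteness, not the existence of a low-weight defining equation.
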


\begin{rem}[Weight~2 Coleman algebraic functions]
	\label{rem: weight 2 coleman functions}
	Coleman algebraic functions of weight at most~$2$ are the kind of functions
        showing up in quadratic Chabauty as in~\cite{BBM16, BD18, BD19,
        BD-II, BMS21}. They are linear combinations of double Coleman integrals,
        single Coleman integrals and rational functions.
        The precise form is given in~\cite[Lemma~4.1.13]{betts:effective}. 
      Coleman algebraic functions of general weight are constructed
      in~\cite[\S4.1]{betts:effective}; they form a subring of the algebra
      of all Coleman (analytic) functions defined by Besser~\cite{Bes02}. 
	The theory of Betts yields bounds for the number of zeros of Coleman
	algebraic functions of bounded weight. In this way, the bound on $\#\cY(\bZ_p)_{S,2}$
	in~\Cref{thm: beta} follows from the existence of the weight~2
        Coleman algebraic functions. Namely, each $\#\cY(\bZ_p)_{S,2,\Sigma}$ is bounded by $\kappa_p \cdot \# \cY(\bF_p)
        \cdot (4g+2n-2)^2 (g+1)$, and multiplying this by $\prod_{\ell \in S} (n_{\ell} +
        n) \cdot \prod_{\ell \not\in S} n_{\ell}$, the number of reduction types~$\Sigma$, yields the bound for $\#\cY(\bZ_p)_{S,2}$.
\end{rem}

\begin{rem}
	\label{rem: depth 1 bound}
	Using essentially the same  argument as in  our proof of 
        \Cref{thm: beta} we can
        show an analogous statement for the \emph{linear} Chabauty--Kim locus $\cY(\bZ_p)_{S,1}$. Namely, if the stronger condition 
	$\beta(Y,s,p) - \rho_f > 0$ 
	holds, then there are nonzero Coleman algebraic functions of weight
        at most~2 vanishing already on the $\cY(\bZ_p)_{S,1,\Sigma}$, and
        the upper bound on $\#\cY(\bZ_p)_{S,2}$ from \Cref{thm: beta}  already
        holds for $\#\cY(\bZ_p)_{S,1}$.
\end{rem}

Our main tool is Betts' theory of weight filtrations on (refined)
Selmer schemes introduced in~\cite{betts:effective}. This machinery reduces
statements about finiteness of and bounds for refined Chabauty--Kim loci to
calculations of local and global Bloch--Kato Selmer groups.
The general strategy
is reviewed in \Cref{sec: general strategy} below. At this point suffice it
to say that the theory takes as input a $G_{\bQ}$-equivariant quotient
$U_Y \twoheadrightarrow U$ of the $\bQ_p$-pro-unipotent étale fundamental
group~$U_Y$ of~$Y$ at an $S$-integral base point,  where we write
$G_K=\Gal(\overline{K}/K)$ for the absolute Galois
group of a field $K$. 
If one is able to
calculate or at least bound the dimensions of the Bloch--Kato Selmer groups of the
weight-graded pieces of~$U$, one gets finiteness of and bounds on 
the size of the associated refined Chabauty--Kim locus $ \cY(\bZ_p)_{S,U}$.
The sets $\cY(\bZ_p)_{S,n}$ above correspond to the choice $U_Y
\twoheadrightarrow U_{Y,n}$, the $n$-th quotient of $U_Y$ along the lower
central series. If one is willing to assume the Bloch--Kato conjecture, one
can choose for~$U$ the full fundamental group $U_{Y,\infty} = U_Y$.
The conditional estimates on the dimensions of the relevant Bloch--Kato
Selmer groups can then be used to obtain conditional bounds on the size of $\cY(\bZ_p)_{S,\infty}$ and hence on the number of $S$-integral points of~$\cY$. This is one of the main results of Betts--Corwin--Leonhardt \cite[Theorem 1.4]{BCL:effective}. In contrast with this, all our results are unconditional. 

In order to achieve this, we work with rather small quotients of the
fundamental group whose Bloch--Kato Selmer groups we are able to compute. For \Cref{thm: alpha}\ref{thm: depth 1 finiteness} we choose the abelianisation $U_{Y,1} = U_Y^{\ab}$ of $U_Y$; the relevant calculations are done in \Cref{sec: depth 1}. For Theorems~\ref{thm: alpha}\ref{thm: depth 2 finiteness} and~\ref{thm: beta}, rather than working with all of $U_{Y,2}$, we construct a certain intermediate quotient 
\[ U_{Y,2} \twoheadrightarrow U \twoheadrightarrow U_{Y,1}. \]
The construction of this intermediate quotient, which is carried out in
\Cref{sec: depth 2} below, is motivated by the analogous construction in
the projective case given by Balakrishnan--Dogra
in~\cite[Proposition~2.2]{BD-II} and generalises~\cite[Remark~3.3]{BD18}.

Finally, in \Cref{sec: weight geq -2 quotient} we also investigate the finiteness statements and bounds that we get from the weight~$\geq -2$ quotient of the fundamental group. This is another intermediate quotient between $U_{Y,2}$ and $U_{Y,1}$, which is in general larger than the one used for Theorems \ref{thm: alpha}\ref{thm: depth 2 finiteness} and~\ref{thm: beta}, so we expect to get finiteness and bounds under weaker conditions. The price to pay for this is that the conditions involve a term 
\begin{equation}\label{hBK}
 h_{\BK} \coloneqq \dim_{\bQ_p} \rH^1_f(G_{\bQ}, \Hom(\bigwedge^2 V_p
  \Jac_X, \bQ_p(1)))\,, 
\end{equation}
which we do not understand well but which is conjectured to vanish as a
consequence of the Bloch--Kato conjectures
\cite[Conjecture~2.8]{BCL:effective}:
\begin{conjecture}
	\label{conj: BK vanishing}
	$h_{\BK} = 0$.
\end{conjecture}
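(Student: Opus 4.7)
The plan is to reinterpret $W := \Hom(\bigwedge^2 V_p \Jac_X, \bQ_p(1))$ geometrically and then reduce $h_{\BK} = 0$ to a particular case of the Bloch--Kato conjectures. Using the Weil pairing identification $V_p\Jac_X^{\vee} \cong V_p\Jac_X(-1)$ together with the standard isomorphism $\rH^2_{\et}(A_{\overline{\bQ}},\bQ_p) \cong \bigwedge^2\rH^1_{\et}(A_{\overline{\bQ}},\bQ_p)$ for an abelian variety $A$, one obtains a $G_{\bQ}$-equivariant isomorphism
\[ W \;\cong\; \rH^2_{\et}(\Jac_{X,\overline{\bQ}},\bQ_p(1)) . \]
Thus $h_{\BK}$ equals $\dim_{\bQ_p}\rH^1_f(G_{\bQ},h^2(\Jac_X)(1))$, the dimension of the Bloch--Kato Selmer group of the weight-zero motive $h^2(\Jac_X)(1)$.

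Next I would strip off the obvious trivial summand: the Weil pairing class realises $\bQ_p(1)$ as a direct summand of $\bigwedge^2 V_p\Jac_X$, dually giving $W \cong \bQ_p \oplus W^{\mathrm{prim}}$. Since $\rH^1_f(G_{\bQ},\bQ_p) = 0$, the problem reduces to showing $\rH^1_f(G_{\bQ},W^{\mathrm{prim}}) = 0$. In the trivial case $g = 1$ one has $\bigwedge^2 V_p E = \det V_p E \cong \bQ_p(1)$, so $W^{\mathrm{prim}} = 0$ and $h_{\BK} = 0$ automatically. For $g \geq 2$ the vanishing is a genuine instance of Bloch--Kato: the weight-zero motive $h^2(\Jac_X)(1)$ has conjectural $\rH^1_f$-dimension equal to $\mathrm{ord}_{s=0} L(h^2(\Jac_X)(1),s)$, which is expected to be zero for motives of the form $h^{2r}(X_0)(r)$ --- with the full contribution to $\rH^0$ accounted for by $\NS(\Jac_X) \otimes \bQ_p$ via Faltings' proof of the Tate conjecture for divisors on abelian varieties.

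A concrete attempt at a proof would then proceed via Euler system machinery (Kato, Kolyvagin, Nekov\'a\v{r}) applied to $h^2(\Jac_X)(1)$, or, using the decomposition $h^2(\Jac_X) = \bigwedge^2 h^1(\Jac_X)$, via a reduction to simpler motivic factors --- for example when $\Jac_X$ is isogenous to a product of elliptic curves, so that $\bigwedge^2 h^1(\Jac_X)$ breaks into pieces accessible through Kato's Euler system and Rankin--Selberg convolutions. The main obstacle is that constructing Euler systems for $h^2$ of a general abelian variety with the non-triviality properties needed to kill the Selmer group is itself a major open problem, addressed so far only in special arithmetic settings (CM Jacobians, Jacobians with large endomorphism rings, modular parametrisations). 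An unconditional proof for a general curve $Y$ is out of reach of current techniques, which is precisely why the authors record this as a standalone conjecture rather than a theorem.
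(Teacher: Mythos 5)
The statement you were asked to prove is not proved in the paper: it is recorded as Conjecture~\ref{conj: BK vanishing} precisely because no proof is known. The authors say explicitly that $h_{\BK}$ is a quantity ``which we do not understand well but which is conjectured to vanish as a consequence of the Bloch--Kato conjectures'', citing \cite[Conjecture~2.8]{BCL:effective}, and they emphasise that none of their theorems assume the conjecture --- the term $h_{\BK}$ is carried along explicitly in Theorems~\ref{full weight -2 finiteness} and~\ref{full weight -2 coleman function} exactly so that the results remain unconditional. So there is no proof in the paper against which to compare yours.

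Your proposal correctly recognises this, and what you do write is a sound account of \emph{why} the vanishing is expected. The identification $\Hom(\bigwedge^2 V_p\Jac_X,\bQ_p(1)) \cong \rH^2_{\et}(\Jac_{X,\overline{\bQ}},\bQ_p)(1)$ is right (both sides are $(\bigwedge^2 V_p\Jac_X)(-1)$ after using the Weil pairing), so $h_{\BK}$ is indeed the Bloch--Kato Selmer dimension of the weight-zero motive $h^2(\Jac_X)(1)$, whose $\rH^1_f$ is predicted to vanish by the Bloch--Kato/Fontaine--Perrin-Riou conjectures, with $\NS(\Jac_{X})\otimes\bQ_p$ accounting for the $\rH^0$ by Faltings (this is exactly the mechanism behind \eqref{eq: NS wedge2} and the computation of $\dim\rH^0(G_\bQ,W^\vee(1))=\rho$ in Lemma~\ref{lem: Galois cohomology of wedge square}). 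Your observation that $h_{\BK}=0$ holds unconditionally for $g=1$, since then $\bigwedge^2 V_p\Jac_X\cong\bQ_p(1)$ and $\rH^1_f(G_\bQ,\bQ_p)=0$, is also correct and is a genuine (if small) unconditional case. But the Euler-system programme you sketch for general $g$ is, as you yourself say, out of reach; your text is a plausibility argument and a research plan, not a proof, and it should not be presented as one. The honest conclusion --- which you reach --- is that the statement is open, and the paper treats it as such.
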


We are not assuming Conjecture~\ref{conj: BK vanishing} for our results but rather make the dependence on the conjecture explicit by including the term~$h_{\BK}$ in the statements. Our results obtained by working with the full weight~$\geq -2$ quotient of the fundamental group read as follows.

\begin{thmABC}
	\label{full weight -2 finiteness}
	If
	\[
	\gamma(Y,s,p) \coloneqq  g^2 - r_p + \rho + b -s - h_{\BK}>0,
	\]
	then~$\cY(\bZ_p)_{S,2}$ is finite.
\end{thmABC}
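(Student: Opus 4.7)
The plan is to apply Betts' weight-filtration machinery~\cite{betts:effective} to the \emph{weight $\geq -2$ quotient} $U \coloneqq U_Y / W_{-3} U_Y$ of the $\bQ_p$-pro-unipotent étale fundamental group of~$Y$. This is an intermediate quotient between $U_{Y,1}$ and $U_{Y,2}$, larger than the Balakrishnan--Dogra-style quotient used for~\Cref{thm: alpha}\ref{thm: depth 2 finiteness}. First I would identify the weight-graded pieces of~$U$ as Galois representations: one has $\gr^W_{-1} U \cong V_p \Jac_X$, while $\gr^W_{-2} U$ fits into a Galois-equivariant short exact sequence
\[
0 \to \bigwedge\nolimits^{\!2} V_p \Jac_X / \bQ_p(1) \to \gr^W_{-2} U \to V_D \to 0,
\]
where $V_D \coloneqq \ker(\bQ_p(1)[D(\Qbar)] \to \bQ_p(1))$ is the $(n-1)$-dimensional cusp contribution coming from $\gr^1$ of the lower central series, and the subobject comes from second commutators modulo the class $\sum a_i \wedge b_i$ that is forced to vanish in weight $-2$ by the surface-group relation.

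The next step is to compute the Bloch--Kato Selmer dimensions of the graded pieces. For $V_p \Jac_X$ this equals $r_p$ by definition of the $p^\infty$-Selmer rank. For $V_D$, Shapiro's lemma applied to each closed point of $D$ reduces to Dirichlet's unit theorem, giving $\dim \rH^1_f(G_\bQ, V_D) = n_1 + n_2 - \#|D|$; this combines with the local de Rham dimension $n-1$ to produce the net contribution $b = \#|D| + n_2 - 1$ to $\gamma$. For $\bigwedge^2 V_p \Jac_X / \bQ_p(1)$, the principal polarization of $\Jac_X$ furnishes a canonical Galois-equivariant splitting $\bigwedge^2 V_p \Jac_X \cong \bQ_p(1) \oplus (\bigwedge^2 V_p \Jac_X / \bQ_p(1))$, since the composite $\bQ_p(1) \hookrightarrow \bigwedge^2 V_p \Jac_X \twoheadrightarrow \bQ_p(1)$ is multiplication by $g$. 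Combined with the vanishing $\rH^1_f(G_\bQ, \bQ_p(1)) = 0$ (coming from the torsion-ness of $\bZ^\times$), this yields $\dim \rH^1_f(G_\bQ, \bigwedge^2 V_p \Jac_X / \bQ_p(1)) = \dim \rH^1_f(G_\bQ, \bigwedge^2 V_p \Jac_X)$. The latter is evaluated via the Bloch--Kato Euler--Poincaré formula, using the Tate conjecture for abelian varieties over number fields to identify $\dim \rH^0(G_\bQ, \bigwedge^2 V_p \Jac_X(-1)) = \rho$, together with the very definition $h_{\BK} = \dim \rH^1_f(G_\bQ, \bigwedge^2 V_p \Jac_X(-1))$.

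Finally I would feed these estimates into Betts' formula for the dimension of the refined Selmer scheme. The reduction-type refinement at primes $\ell \in S$ produces the $-s$ correction, and the finiteness criterion $\dim \Sel_S^{\min}(U) < \dim_{\bQ_p} U^{\dR}/F^0 U^{\dR}$ rearranges to exactly $\gamma(Y,s,p) > 0$, yielding finiteness of $\cY(\bZ_p)_{S,2} \subseteq \cY(\bZ_p)_{S,U}$. The main obstacle I anticipate is the Hodge-theoretic bookkeeping in the Euler--Poincaré formula: the local de Rham dimension of $\bigwedge^2 V_p \Jac_X / \bQ_p(1)$ and the Hodge correction to $\dim \rH^1_f(G_\bQ, \bigwedge^2 V_p \Jac_X)$ are separately somewhat involved expressions in~$g$, but they must combine cleanly to produce the $g^2$ summand appearing in~$\gamma$.
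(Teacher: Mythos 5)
Your overall strategy coincides with the paper's: apply Betts' machinery to the full weight $\geq -2$ quotient $U = U_Y/W_{-3}U_Y$, compute the Bloch--Kato Selmer dimensions of the weight-graded pieces (the cusp piece via Shapiro and Dirichlet's unit theorem, the exterior-square piece via the Poitou--Tate formula with Faltings/Tate supplying the $\rho$ term and the definition of $h_{\BK}$ supplying that term), and conclude by the dimension count with the $-s$ correction from reduction types. However, your identification of $\gr^W_{-2}U$ contains a genuine error. For an affine curve the fundamental group is free, and $\gr^W U_Y$ is the free graded Lie algebra on $\gr^W U_Y^{\ab}$; hence $\gr^W_{-2}U_Y \cong \bigwedge^2 V_p\Jac_X \oplus \bQ_p(1)^{D(\Qbar)}/\bQ_p(1)$ with the \emph{full} exterior square, as in \eqref{eq: weight -2 extension}. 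The surface-group relation does not force $\sum a_i \wedge b_i$ to vanish in weight $-2$ (that happens only in the projective case); for an affine curve it identifies this class with the total boundary class, and this single one-dimensional identification is already accounted for by dividing $\bQ_p(1)^{D(\Qbar)}$ by the diagonal $\bQ_p(1)$. Your graded piece $\bigl(\bigwedge^2 V_p\Jac_X/\bQ_p(1)\bigr) \oplus V_D$ therefore quotients out a copy of $\bQ_p(1)$ twice and is one dimension too small.

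This is not merely cosmetic bookkeeping. Discarding an extra copy of $\bQ_p(1)$ costs nothing globally (as you note, $\rH^1_f(G_{\bQ},\bQ_p(1))=0$) but loses one dimension locally (since $\dim\rH^1_f(G_p,\bQ_p(1))=1$), so your local-minus-global count comes out to $\gamma(Y,s,p)-1$ rather than $\gamma(Y,s,p)$: the ``clean combination'' you anticipate produces $g^2-1$, not $g^2$. Because you are implicitly working with a further quotient of $U$, the argument is still a valid finiteness proof, but it only establishes finiteness of $\cY(\bZ_p)_{S,2}$ under the strictly stronger hypothesis $\gamma(Y,s,p)>1$, not the stated $\gamma(Y,s,p)>0$. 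To recover the theorem, keep the full $\bigwedge^2 V_p\Jac_X$ in weight $-2$; its local Selmer dimension $\tfrac12 g(3g-1)$ is extracted in the paper from the local Hilbert series $\frac{1-gt}{1-2gt-(n-1)t^2}$ of \cite[Lemma~2.6]{BCL:effective} rather than by direct Hodge-theoretic computation, which sidesteps the bookkeeping you were worried about.
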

\begin{thmABC}
	\label{full weight -2 coleman function}
	If
	\[
	\delta(Y,s,p) \coloneqq \frac 12 g(3g+1) - \frac12 r_p(r_p+3) + \rho + b
	- s - h_{\BK}>0,
	\]	
	then for every reduction type~$\Sigma$ there exists a nonzero Coleman algebraic function of weight at most~2 that vanishes on $\cY(\bZ_p)_{S,2,\Sigma}$.
	Moreover, the size of the refined Chabauty--Kim locus $\#\cY(\bZ_p)_{S,2}$ (and thus the number of $S$-integral points of~$\cY$) is then bounded by the same bound as in \Cref{thm: beta}.
\end{thmABC}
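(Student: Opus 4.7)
The plan is to apply the weight filtration machinery of Betts~\cite{betts:effective} to the full weight $\geq -2$ quotient $U \coloneqq U_Y / W_{-3} U_Y$ of the $\bQ_p$-pro-unipotent étale fundamental group, following the same template as the proof of \Cref{thm: beta} but replacing the smaller intermediate quotient constructed in \Cref{sec: depth 2} with $U$. The factorisation $U_{Y,2} \twoheadrightarrow U \twoheadrightarrow U_{Y,1}$ ensures that any Coleman algebraic function produced on the refined $U$-Selmer scheme pulls back to one on the refined Chabauty--Kim locus $\cY(\bZ_p)_{S,2,\Sigma}$, and because $U$ is concentrated in weights $-1$ and $-2$ any such function has weight $\leq 2$ by~\cite[Lemma~4.1.13]{betts:effective}.

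The first concrete step is to describe the weight-graded pieces of $U$:
\[ \gr^W_{-1} U = V_p\Jac_X, \qquad \gr^W_{-2} U = \bigwedge\nolimits^2 V_p\Jac_X \;\oplus\; M_D, \]
where $M_D$ is the cuspidal part of the abelianisation, fitting into $0 \to M_D \to U_Y^{\ab} \to V_p\Jac_X \to 0$ with $\dim_{\bQ_p} M_D = n-1$. The enlargement compared with \Cref{thm: beta} is precisely that we keep the entire $\bigwedge^2 V_p\Jac_X$, rather than the $\rho_f$-dimensional piece cut out by cycle classes in $\NS$; this is the source of the weakened hypothesis.

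The second step is to compute local and global Bloch--Kato Selmer dimensions of these graded pieces and invoke the effective Chabauty--Kim estimates of~\cite{betts:effective} in their refined version. The contributions from $V_p\Jac_X$ and $M_D$ are unchanged from the proof of \Cref{thm: alpha}\ref{thm: depth 1 finiteness} and yield local-minus-global dimensions $g - r_p$ and $b - s$ respectively. The new contribution is
\[ \dim_{\bQ_p}\rH^1_f(G_{\bQ_p},\bigwedge\nolimits^2 V_p\Jac_X) \;-\; \dim_{\bQ_p}\rH^1_f(G_{\bQ,T},\bigwedge\nolimits^2 V_p\Jac_X). \]
A Hodge-filtration computation on $\bigwedge^2 \rH^1_{\dR}(X)$ combined with the Bloch--Kato exponential sequence gives that the local term equals $\tfrac12 g(3g-1)$. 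For the global term, the Tate conjecture for divisors on $\Jac_X$ (Faltings) identifies the $G_\bQ$-invariants of $\bigwedge^2 V_p\Jac_X$ with $\NS(\Jac_X)\otimes\bQ_p$ of dimension $\rho$, and the Bloch--Kato Euler characteristic formula combined with Poincaré duality bounds the Selmer group by $\tfrac12 r_p(r_p+1) - \rho + h_{\BK}$, where the correction $h_{\BK}$ is exactly the quantity from~\eqref{hBK}. Summing the three contributions yields $\delta(Y,s,p)$, so if $\delta > 0$ then the image of the global refined Selmer scheme in the local one has positive codimension, producing the desired nonzero weight-$\leq 2$ Coleman algebraic function vanishing on $\cY(\bZ_p)_{S,2,\Sigma}$. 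The resulting bound on $\#\cY(\bZ_p)_{S,2}$ matches that of \Cref{thm: beta} because the zero estimate for weight-$\leq 2$ Coleman algebraic functions in~\cite{betts:effective} depends only on the invariants $(p,g,n,\#\cY(\bF_p))$ and the number of reduction types, and not on the particular choice of quotient.

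The main obstacle is the unconditional global Selmer estimate for $\bigwedge^2 V_p\Jac_X$. Conjecturally $h_{\BK} = 0$ by \Cref{conj: BK vanishing}, in which case the bound collapses to $\tfrac12 r_p(r_p+1) - \rho$ and the hypothesis simplifies. Unconditionally, the only visible obstruction to sharper control of $\dim_{\bQ_p}\rH^1_f(G_{\bQ,T},\bigwedge^2 V_p\Jac_X)$ is precisely $h_{\BK}$, the Selmer group of the Tate dual, and it must therefore be carried through the argument, appearing as the $-h_{\BK}$ term in the final hypothesis $\delta(Y,s,p) > 0$.
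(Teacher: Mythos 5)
Your choice of quotient $U = U_Y/W_{-3}U_Y$ and the overall template (compute local and global Bloch--Kato Selmer dimensions of the two weight-graded pieces and feed them into Betts' machinery) coincide with the paper's proof, but two steps in your execution are incorrect, and they happen to cancel numerically, so the argument as written does not establish the theorem. The first problem is the criterion you invoke. Summing the local-minus-global Selmer \emph{dimensions} of the graded pieces (together with $-s$) and requiring positivity is the finiteness criterion of \Cref{lem: Selmer scheme dimensions}; it is what proves \Cref{full weight -2 finiteness}, and it gives no control on the weight of the resulting Coleman function. Your assertion that ``because $U$ is concentrated in weights $-1$ and $-2$ any such function has weight $\leq 2$'' is false: $\cO(\rH^1_f(G_p,U))$ is a polynomial algebra on generators of weights $1$ and $2$, so it contains functions of arbitrarily large weight, and \cite[Lemma~4.1.13]{betts:effective} only describes what weight-$\leq 2$ functions look like. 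To obtain a function of weight at most~$2$ one must compare the truncated Hilbert series, $\sum_{i=0}^{2} c_i^{\glob} < \sum_{i=0}^{2} c_i^{\loc}$ as in \Cref{lem: coleman function from Hilbert series}; the degree-$2$ coefficients contain the extra terms $\tfrac12 r_p(r_p+1)$ and $\tfrac12 g(g+1)$ coming from degree-two monomials in the weight-$1$ generators, and this is precisely why the hypothesis $\delta>0$ of the present theorem differs from the hypothesis $\gamma>0$ of \Cref{full weight -2 finiteness}.

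The second problem is your global Selmer estimate for the new piece: the correct value is $\dim\rH^1_f(G_{\bQ},\bigwedge^2 V_p\Jac_X) = \tfrac12 g(g+1) - \rho + h_{\BK}$ (\Cref{lem: Galois cohomology of wedge square}), not $\tfrac12 r_p(r_p+1) - \rho + h_{\BK}$. The Poitou--Tate/Euler-characteristic formula for $W=\bigwedge^2 V_p\Jac_X$ involves $\dim\rH^1_f(G_p,W)=\tfrac12 g(3g-1)$ and $\dim W^{\sigma=1}=g(g-1)$, and no invariant of $V_p\Jac_X$ itself; there is no mechanism by which $r_p$ could enter, and your expression can even be negative (take $r_p=0$, $\rho=1$, $h_{\BK}=0$), so it cannot be the dimension of a Selmer group. (Relatedly, it is the $G_{\bQ}$-invariants of $\Hom(\bigwedge^2 V_p\Jac_X,\bQ_p(1))$, not of $\bigwedge^2 V_p\Jac_X$, that have dimension $\rho$; the latter vanish by purity.) Substituting $\tfrac12 r_p(r_p+1)$ for $\tfrac12 g(g+1)$ in the global term exactly compensates for omitting the quadratic Hilbert-series contributions, which is why you still land on $\delta(Y,s,p)$; but each of the two steps is individually wrong, and the proof needs to be rerouted through the Hilbert-series comparison with the correct global dimension to be valid.
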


Having stated our main results, let us conclude with a few remarks.

\begin{rem}[$r$ versus $r_p$]
	\label{rem: r versus rp}
	The $p^\infty$-Selmer rank $r_p$ and the Mordell--Weil rank~$r$ of the Jacobian
	of $X$ satisfy $r_p \geq r$, with equality if and only if the
	$p$-divisible part of the Tate--Shafarevich group of $\Jac_X$ is
	trivial, as predicted by the Tate--Shafarevich conjecture.
	One way
	to replace~$r_p$ with~$r$ in Theorems~\ref{thm: alpha} and \ref{full weight -2 finiteness} without assuming the conjecture
	is to modify the definition of the Selmer scheme using the
	``Balakrishnan--Dogra trick'' (see~\cite[Definition~2.2]{BD18} for
	the case of a projective curve, and~\cite[§6.3]{betts:effective}
	for the affine variant).
	The modified refined Chabauty--Kim loci $\cY(\bZ_p)^{\mathrm{BD}}_{S,n}$ are potentially smaller than $\cY(\bZ_p)_{S,n}$ but still contain $\cY(\bZ_S)$.
	The analogous finiteness statements are as follows:
	Let
	\begin{align*}
		\alpha'_1(Y,s)&\coloneqq g-r+b-s;\\
		\alpha'_2(Y,s)&\coloneqq \alpha'_1(Y,s)+\rho_f;\\
		\gamma'(Y,s,p)&\coloneqq g^2 - r + \rho + b -s - h_{\BK}.
	\end{align*}
	If $\alpha'_1(Y,s)>0$ (resp.\ $\alpha'_2(Y,s)>0$ or
        $\gamma'(Y,s,p)>0$), then the Chabauty--Kim locus $\cY(\bZ_p)_{S,1}^{\mathrm{BD}}$ (resp.\ $\cY(\bZ_p)_{S,2}^{\mathrm{BD}}$) is finite.
        
    The modified loci $\cY(\bZ_p)^{\mathrm{BD}}_{S,n}$ can also be written as a union of $\cY(\bZ_p)^{\mathrm{BD}}_{S,n,\Sigma}$ over all reduction types, and the analogues of Theorems~\ref{thm: beta} and~\ref{full weight -2 coleman function} are: Let
    \begin{align*}
    	\beta'(Y,s) &\coloneqq \frac12 g(g+3) - \frac12 r(r+3) + \rho_f +
    b -s;\\
    	\delta'(Y,s,p) &\coloneqq \frac 12 g(3g+1) - \frac12 r(r+3) + \rho + b
    	- s - h_{\BK}.
   \end{align*}
	If $\beta'(Y,s) > 0$ or $\delta'(Y,s,p) > 0$, then for every
        reduction type~$\Sigma$ there exists a nonzero Coleman algebraic
        function of weight at most~2 that vanishes on
        $\cY(\bZ_p)^{\mathrm{BD}}_{S,2,\Sigma}$, and the size of the locus
        $\cY(\bZ_p)^{\mathrm{BD}}_{S,2}$ is bounded by the same bound as in
        \Cref{thm: beta}. Note that this is also a bound for the
        number of $S$-integral points thanks to the inclusion $\cY(\bZ_S)
        \subseteq \cY(\bZ_p)^{\mathrm{BD}}_{S,2}$. As in~Remark~\ref{rem:
        depth 1 bound}, we also get the same bound on 
        $\#\cY(\bZ_p)^{\mathrm{BD}}_{S,1}$ 
        when $\beta'(Y,s) -\rho_f>0$.
\end{rem}

\begin{rem}[Dependence on $p$]
	\label{rem: dependence on p}
	The conditions in Theorems~\ref{thm: alpha}--\ref{full weight -2 coleman function} depend on~$p$ only through $r_p$ and $h_{\BK}$.  Therefore, if the Tate--Shafarevich conjecture and the Bloch--Kato conjecture \ref{conj: BK vanishing} are known to hold, then we have $r_p = r$ and $h_{\BK} = 0$, and the conditions are in fact independent of~$p$. As explained in \Cref{rem: r versus rp}, the dependence on $r_p$ can be avoided by using the Balakrishnan--Dogra trick, which is why $\alpha_1'(Y,s)$, $\alpha_2'(Y,s)$, and $\beta'(Y,s)$ are independent of~$p$. 
\end{rem}

\begin{rem}[Dependence on $\cY$ and $S$]
	\label{rem: dependence on S}
	Note that 
	the conditions in Theorems~\ref{thm: alpha}--\ref{full weight -2
		coleman function}
	do not depend on the choice of $S$-integral model $\cY/\bZ_S$,
	only on the generic fibre~$Y/\bQ$. They also do not depend on the
	set~$S$ but only on its cardinality $s = \#S$. The bounds on $\#\cY(\bZ_p)_{S,2}$ in Theorems~\ref{thm: beta} and~\ref{full weight -2 coleman function} do depend on $\cY/\bZ_S$ through the invariants $n_{\ell}$, i.e.\ the number of irreducible components of the special fibres.
\end{rem}

\begin{rem}
  It would be interesting to make the results of this paper explicit. 
  For a projective curve $X/\bQ$ satisfying $r=g$ and $\rho>1$
  explicit methods for the computation of $X(\bQ_p)^{\mathrm{BD}}_{U}$
  based on $p$-adic heights have
  been developed (and applied) in~\cite{BD18, BD-II, BDMTV19, BDMTV2}, where $U$ is the fundamental group quotient constructed in~\cite[\S3]{BD18}. 
  We
  expect that one could use similar methods to compute $\cY(\bZ_p)_{S,U}$
  (or at least a finite superset thereof) for the quotient $U$ constructed
  in Lemma~\ref{depth 2 quotient}, at least in some special cases.
\end{rem}
\subsection*{Acknowledgements}

We thank Netan Dogra for pointing out a mistake in an earlier version of this paper. We thank Alex Betts for helpful comments. We thank the referees for their suggestions.
The first author acknowledges support from the Deutsche Forschungsgemeinschaft (DFG, German Research Foundation) through TRR 326 Geometry and Arithmetic of Uniformized Structures, project number 444845124.
The second and third author were supported by an NWO Vidi grant.

\section{Examples}
\label{sec: examples}

In this section, we give some sample applications of our theorems. We keep
the notation of the previous section.
\begin{example}[Rank equals genus case]
	\label{rem: rank equals genus}
	Assume that $r_p = r = g$ and assume for simplicity that all points at infinity are
        rational. Then Theorems~\ref{thm: alpha} and \ref{thm: beta} simplify as follows: 
	\begin{enumerate}
		\item If $n - 1 - s > 0$, then $\cY(\bZ_p)_{S,1}$ is finite.
		\item If $n - 1 - s + \rho_f > 0$, then $\cY(\bZ_p)_{S,2}$ is finite, for every reduction type~$\Sigma$ there exists a nonzero Coleman algebraic function of weight at most~$2$ vanishing on~$\cY(\bZ_p)_{S,2,\Sigma}$, and $\#\cY(\bZ_p)_{S,2}$ is bounded as in \Cref{thm: beta}.
	\end{enumerate}
\end{example}

\begin{example}[Totally ramified superelliptic curves]
	\label{ex:hyp-odd}
        Let $Y/\bQ$ be an  affine superelliptic curve given by
        an equation $ y^m = f(x)$,
        where $m>1$, $f\in \bZ[x]$ is squarefree of degree $d>2$ and
        $\gcd(d,m)=1$.  Then we have $n=n_1=\#|D|=1$, so that $b=0$.
        Suppose that $r = g$. Then the Balakrishnan--Dogra variant (see \Cref{rem: r versus rp}) of \Cref{thm: alpha}\ref{thm: depth 2 finiteness} shows that
        $\cY(\bZ_p)^{\mathrm{BD}}_{\varnothing, 2}$ is finite. The variant of \Cref{thm: beta} shows that for every reduction type~$\Sigma$ there
        exists a nonzero Coleman algebraic function of weight at most~2
        vanishing on $\cY(\bZ_p)_{\varnothing,2,\Sigma}$ and that we have the simple bound
        \begin{equation}\label{eq:superbound}
	 \#\cY(\bZ) \leq \#\cY(\bZ_p)^{\mathrm{BD}}_{\varnothing,2} \leq \kappa_p \cdot \prod_{\ell} n_{\ell}\cdot
          \#\cY(\bF_p) \cdot 16g^2(g+1)\,. 
        \end{equation}
In particular, when $m=2$ and $d=2g+1\ge 3$, then $Y$ is an affine
  hyperelliptic curve of genus~$g$ and odd degree. 
  In this case, the finiteness of
  $\cY(\bZ_p)^{\mathrm{BD}}_{\varnothing, 2}$ was previously shown in \cite[Theorem~1.1]{BBM16}, and then again in \cite[Theorem~1.1]{BD18}. 
An upper bound for $\#\cY(\bZ)$ of order $g^3$ was given by
Balakrishnan--Dogra  in~\cite[Theorem~1.3]{BD19}, whereas 
the bound we obtain from
specialising~\Cref{eq:superbound} is of order~$g^4$ due to
Hasse--Weil.
Their bound is better because their construction is based on
differential operators that are special to hyperelliptic curves, whereas we
use the general approach of Betts. (Compare with the upper bound for
rational points in~\cite[Theorem~1.1]{BD19}; it is of order $g^3$ for
hyperelliptic curves, but of order $g^4$ for non-hyperelliptic curves.)
\end{example}
\begin{example}[Even degree hyperelliptic curves]\label{ex:hyp-even}
	Now let $Y/\bQ$ be an affine hyperelliptic curve given by an
        equation
        \[ y^2 = a_{2g+2}x^{2g+2} + \ldots + a_0, \quad a_i \in
        \bZ\,,\; a_{2g+2}\ne 0\,, \]
        where $a_{2g+2}x^{2g+2} + \ldots + a_0$ is squarefree. Then we have
        $n=2$.
        Suppose that
        $a_{2g+2}$ is either a square of an integer or is negative. Then $b=1$, and
        hence the Balakrishnan--Dogra variant of \Cref{thm: alpha}\ref{thm: depth 2 finiteness} gives an unconditional proof that $\cY(\bZ)$ is finite using
  non-abelian  Chabauty when $r\le g+1$. 
  Suppose in addition that $r=g$. Then $\alpha'_2(Y,s) = \beta'(Y,s) =
  \rho_f+1-s$, so 
        whenever $s < \rho_f+1$, we
  obtain the upper bound 
        \begin{equation*}\label{eq:evenbound}
          \#\cY(\bZ_S) \leq \#\cY(\bZ_p)^{\mathrm{BD}}_{S,2} \leq \kappa_p \cdot \prod_{\ell\in S}(n_{\ell}+2)\cdot \prod_{\ell\notin S} n_{\ell}\cdot
          \#\cY(\bF_p) \cdot (4g+2)^2(g+1)\,. 
        \end{equation*}
        In fact, when $r=g$, then $\alpha_1'(Y,0) >0$,
        and hence
\Cref{thm: alpha}\ref{thm: depth 1 finiteness} and \Cref{rem: r versus rp}
        imply that the depth~1 locus $\cY(\bZ_p)^{\mathrm{BD}}_{\varnothing,
        1}$ is already finite.
        In this case the conclusions of~\Cref{thm: beta} also hold for $\cY(\bZ_p)^{\mathrm{BD}}_{\varnothing,
        	1}$ by \Cref{rem: depth 1 bound}. In
        particular,  we get the  bound 
        \begin{equation*}\label{eq:depth1weight2}
          \# \cY(\bZ)  \le \# \cY(\bZ_p)^{\mathrm{BD}}_{\varnothing, 1}
                 \leq \kappa_p \cdot \prod_{\ell} n_{\ell} \cdot \# \cY(\bF_p)
		\cdot (4g+2)^2 (g+1)\,.
        \end{equation*}
\end{example}

\begin{example}[$S$-integral points on the thrice-punctured line]
	\label{ex: thrice-punctured line}
	Let $\cY \coloneqq \bP^1_{\bZ_S} \smallsetminus \{0,1,\infty\}$ be the thrice-punctured line. Assume that $2 \in S$; otherwise all refined Chabauty--Kim loci are automatically empty. We have $b = 2$ and $r_p = r = g = \rho_f = h_{\BK} = 0$ since the compactification $\bP^1$	has trivial Jacobian, so Theorems~\ref{thm: alpha}--\ref{full weight -2 coleman function} apply whenever $\#S < 2$, i.e., for $S = \{2\}$. Theorem~\ref{thm: alpha}\ref{thm: depth 2 finiteness} (or \Cref{full weight -2 finiteness}) yields the finiteness of $\cY(\bZ_p)_{\{2\},2}$, and Theorem~\ref{thm: beta} (or \Cref{full weight -2 coleman function}) shows that for each of the three reduction types~$\Sigma$ (corresponding to the three cusps $0$, $1$, $\infty$) there exists a nonzero Coleman algebraic function of weight at most~2 vanishing on $\cY(\bZ_p)_{\{2\},2,\Sigma}$, and we have the bound
	\[ \#\cY(\bZ[1/2]) \leq
	\#\cY(\bZ_p)_{\{2\},2} \leq 
	48(p-2) \kappa_p = 48 \Bigl(p-2 + \frac{p-1}{\log(p)}\Bigr). \]
	
	We actually know explicit equations cutting out the refined Chabauty--Kim loci $\cY(\bZ_p)_{S,2}$ for $\#S \leq 2$ by work of Best--Betts--Kumpitsch--Lüdtke--McAndrew--Qian--Studnia--Xu \cite{BBKLMQSX}. Namely, $\cY(\bZ_p)_{\{2\},2}$ consists of the common solutions in $\cY(\bZ_p)$ of the two equations
	\[ \log(z) = 0, \quad \Li_2(z) = 0, \]
	along with their translates under the natural $S_3$-action
	\cite[Theorem~A]{BBKLMQSX}. 
	The $p$-adic logarithm~$\log(z)$, which is Coleman algebraic of weight~$2$, is the function whose existence is predicted by \Cref{thm: beta} for one of the three reduction types. Indeed, it vanishes on~$\{-1\}$, the set of $\{2\}$-integral points reducing to the cusp $1$ modulo~$2$. The dilogarithm~$\Li_2(z)$ on the other hand is Coleman
	algebraic of weight~$4$, so its vanishing on $\{-1\}$ is not predicted by \Cref{thm: beta}. 
	The reason that the results of this paper do not capture all information coming from the depth~2 Chabauty--Kim locus $\cY(\bZ_p)_{S,2}$ is that $U_{Y,2}$ has a subgroup isomorphic to $\bQ_p(2)$, which is of weight~$-4$, whereas the proofs of our theorems make use only of the weight~$\geq -2$ quotient of $U_Y$.
	
	For the same reason, the results of this paper do not show the finiteness of $\cY(\bZ_p)_{S,2}$ for $\#S = 2$, although we know by \cite[Theorem~B]{BBKLMQSX} that the locus is finite also for such~$S$ and defined by explicit Coleman algebraic functions of weight~$4$.
\end{example}

\section{General strategy}
\label{sec: general strategy}

For the proofs of Theorems~\ref{thm: alpha}--\ref{full weight -2 coleman function} we follow Betts' strategy of exploiting weight filtrations on Selmer
schemes. Specifically, Theorem~\ref{thm: alpha} follows from a calculation of the dimensions of the
local and global Selmer scheme, and Theorem~\ref{thm: beta}
will follow from
\cite[Theorem~6.2.1\,A)+B)]{betts:effective}, for suitable quotients~$U_Y
\twoheadrightarrow U$ of the fundamental group. 

We briefly explain the strategy and sketch the arguments by which results in Chabauty--Kim theory follow from (abelian) Galois cohomology calculations. The reader who is willing to apply Betts' machinery as a black box may skip this section.

We write $G_v\coloneqq G_{\bQ_v}$ for a place $v$ of $\bQ$. Let $U$ be a $G_{\bQ}$-equivariant quotient
of~$U_Y$. The local Bloch--Kato Selmer scheme $\rH^1_f(G_p, U)$ consists of
the crystalline classes in $\rH^1(G_p, U)$.
Let $\rH^1_f(G_{\bQ}, U)$ denote the subspace of $\rH^1(G_{\bQ}, U)$ containing those classes that are crystalline at~$p$ and unramified at all other places.
Recall from~\cite[\S6.2]{betts:effective} that $\cY(\bZ_S)$ can be
partitioned according to reduction types. 
Two $S$-integral points have the same reduction type if and only if
for all primes $\ell$, their mod-$\ell$ reductions are either two
non-cuspidal points on the same irreducible component or are the same
cuspidal point. Here, mod-$\ell$ reduction refers to the special fibre of
$\cX$ (if $\ell \not\in S$), respectively of the minimal regular normal
crossings model of $(X,D)$ over $\bZ_{\ell}$ (if $\ell \in S$; see~\cite[Appendix~B]{Bet18}).

For every reduction type $\Sigma$, Betts--Dogra  define 
the global refined Selmer scheme $\Sel_{\Sigma,U}\subset \rH^1(G_\bQ, U)$
in~\cite[Definition~1.2.2]{BD:refined} (see also~\cite[\S3.2,
\S6.2]{betts:effective}).
Then we have a commutative diagram as follows~\cite[§6.2]{betts:effective}:
\[
\begin{tikzcd}
	\cY(\bZ_S)_{\Sigma} \dar["j_S"] \rar[hook] & \cY(\bZ_p) \dar["j_p"] \\
	\Sel_{\Sigma,U}(\bQ_p) \rar["\loc_p"] & \rH^1_f(G_p, U)(\bQ_p).
\end{tikzcd}
\]
The map $\loc_p$ in the bottom row is an algebraic map of affine $\bQ_p$-schemes.

\begin{defn}
	The \emph{refined Chabauty--Kim locus} $\cY(\bZ_p)_{S,U,\Sigma}$ for the reduction type~$\Sigma$ is the subset of those points of $\cY(\bZ_p)$ whose image in $\rH^1_f(G_p,U)$ is contained in the scheme-theoretic
	image of $\Sel_{\Sigma,U}$ under the localisation map. The total \emph{refined Chabauty--Kim locus} $\cY(\bZ_p)_{S,U}$ is defined as the union over all reduction types
	\[ \cY(\bZ_p)_{S,U} \coloneqq \bigcup_{\Sigma} \cY(\bZ_p)_{S,U,\Sigma}. \]
\end{defn}

We have the inclusion
$\cY(\bZ_S)_{\Sigma} \subseteq \cY(\bZ_p)_{S,U,\Sigma}$
for all reduction types~$\Sigma$, and hence
\[ \cY(\bZ_S) \subseteq \cY(\bZ_p)_{S,U}. \]
In particular, finiteness results and size bounds for the Chabauty--Kim loci imply the same for the set of $S$-integral points.

The set $\cY(\bZ_p)_{S,U}$ is finite whenever the localisation map~$\loc_p$ is not dominant, for example when
\[ \dim \Sel_{\Sigma,U} < \dim \rH^1_f(G_p, U) \]
for all~$\Sigma$. The dimensions of these non-abelian cohomology groups can be controlled by calculating dimensions of abelian cohomology groups arising as graded pieces of the weight filtration as follows. 

The pro-unipotent group~$U$ carries a weight filtration by subgroups \cite[Lemma~2.1.8]{betts:effective}
\[ \ldots \subseteq W_{-2} U \subseteq W_{-1} U = U, \]
such that $[W_{-i} U, W_{-j} U] \subseteq W_{-(i+j)} U$
for all $i,j \geq 1$. The graded pieces~$\gr_{-k}^W U = W_{-k} U / W_{-k-1}
U$ are representations of~$G_{\bQ}$ on finite-dimensional $\bQ_p$-vector spaces. 

\begin{lemma}
	\label{lem: Selmer scheme dimensions}
	Let $U_Y \twoheadrightarrow U$ be a finite-dimensional $G_{\bQ}$-equivariant quotient. Then the dimensions of the local and global Selmer scheme satisfy
	\begin{align*}
		\dim \Sel_{\Sigma,U} &\leq s + \sum_{k=1}^\infty \dim
                \rH^1_f(G_{\bQ}, \gr_{-k}^W U), \\
		\dim \rH^1_f(G_p, U) &= \sum_{k=1}^\infty \dim \rH^1_f(G_p,
                \gr_{-k}^W U). 
	\end{align*}
	In particular, if
	\begin{equation} 
		\label{eq: s-inequality}
		\sum_{k=1}^\infty \bigl( \dim \rH^1_f(G_p, \gr_{-k}^W U) -
                \dim \rH^1_f(G_{\bQ}, \gr_{-k}^W U) \bigr) - s > 0,
	\end{equation}
	then $\cY(\bZ_p)_{S,U}$ is finite.
\end{lemma}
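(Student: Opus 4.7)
The plan is to proceed by induction along the weight filtration on $U$, reducing the non-abelian cohomology statements to their abelian counterparts for the graded pieces $\gr_{-k}^W U$, and then to deduce the finiteness statement via the standard Chabauty--Kim argument.

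First I would observe that since $U$ is finite-dimensional, the weight filtration $U = W_{-1}U \supseteq W_{-2}U \supseteq \ldots$ terminates after finitely many steps. The bracket containment $[W_{-i}U, W_{-j}U] \subseteq W_{-(i+j)}U$ ensures that each quotient step
\[ 1 \to \gr_{-k}^W U \to U/W_{-k-1}U \to U/W_{-k}U \to 1 \]
is a central extension of unipotent $G_{\bQ}$-groups with abelian kernel, providing the input for the induction. For the local formula, I would invoke the standard principle going back to Kim that for such a central extension the sequence of Bloch--Kato local Selmer schemes is short exact in the category of affine $\bQ_p$-schemes (the right-hand map is surjective with fibres torsors under $\rH^1_f(G_p, \gr_{-k}^W U)$), so that dimensions add additively. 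Induction on the length of the filtration then yields the claimed equality.

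For the global inequality I would run the analogous induction on $\Sel_{\Sigma, U/W_{-k}U}$: the fibres of $\Sel_{\Sigma, U/W_{-k-1}U} \to \Sel_{\Sigma, U/W_{-k}U}$ are controlled by the abelian Bloch--Kato group $\rH^1_f(G_{\bQ}, \gr_{-k}^W U)$, accounting for the sum in the bound. The $+s$ correction arises because $\Sel_{\Sigma, U}$ is not literally the Bloch--Kato global Selmer scheme: at each prime $\ell \in S$ the local condition is a reduction-type-compatible subvariety of $\rH^1(G_\ell, U)$ rather than the unramified part. A bookkeeping argument, following \cite[§3.2, §6.2]{betts:effective}, shows that the cumulative contribution of these loosened conditions across all graded pieces and all $\ell \in S$ is bounded by $s$ in total.

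Finally, under inequality~\eqref{eq: s-inequality} the two bounds combine to give $\dim \Sel_{\Sigma, U} < \dim \rH^1_f(G_p, U)$ for every reduction type $\Sigma$, so the localisation map $\loc_p$ is not dominant and its scheme-theoretic image lies in a proper Zariski closed subvariety of $\rH^1_f(G_p, U)$. Pulling back a nontrivial defining equation via $j_p$ yields a nonzero Coleman analytic function on $\cY(\bZ_p)$ vanishing on $\cY(\bZ_p)_{S,U,\Sigma}$; since such a function has only finitely many zeros on each of the finitely many residue disks of $\cY(\bZ_p)$, each $\cY(\bZ_p)_{S,U,\Sigma}$ is finite, and taking the union over the finitely many reduction types gives finiteness of $\cY(\bZ_p)_{S,U}$. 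The main obstacle I anticipate is the careful justification of the $+s$ bound: one must verify that relaxing the local condition at primes in $S$ contributes a cumulative dimensional excess of at most $s$ rather than $s$ per graded piece, which requires a detailed analysis of the refined local conditions at $\ell \in S$ and in particular the fact that they carry essentially a one-dimensional freedom per prime coming from the combinatorics of the boundary divisor.
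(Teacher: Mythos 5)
Your proposal is essentially the paper's argument: both reduce the dimension count to the weight-graded pieces and conclude finiteness from non-dominance of $\loc_p$. The only real difference is one of packaging — the paper directly cites Betts' structural results (the closed embedding $\Sel_{\Sigma,U}\hookrightarrow \prod_k \rH^1_f(G_{\bQ},\gr_{-k}^W U)\times\prod_{\ell\neq p}\fS_{\Sigma_\ell}$ from \cite[Lemma~3.2.6]{betts:effective} and the product decomposition of $\rH^1_f(G_p,U)$ from \cite[Corollary~3.1.11]{betts:effective}), whereas you sketch the underlying induction along the central series yourself. The ``$+s$ bookkeeping'' you rightly flag as the main obstacle is exactly what \cite[Lemmas~6.1.1 and~6.1.4]{betts:effective} supply: each local factor $\fS_{\Sigma_\ell}$ is a single scheme attached to the prime $\ell$ (not one per graded piece), which is empty or a point for $\ell\notin S$ and at most a genus-$0$ curve, hence of dimension at most one, for $\ell\in S$ — so the total excess is at most $s$ once and for all, and your worry about an $s$-per-graded-piece contribution does not arise.
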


\begin{proof}
	By~\cite[Lemma~3.2.6]{betts:effective}, each of the spaces $\Sel_{\Sigma,U}$ is
        (non-canonically) a closed subscheme of $\prod_{k=1}^\infty
        \rH^1_f(G_{\bQ}, \gr_{-k}^W U) \times \prod_{\ell \neq p}
        \mathfrak{S}_{\Sigma_{\ell}}$, where each $\fS_{\Sigma_{\ell}}$ is
        empty, a single point, or a curve of genus~0
        \cite[Lemma~6.1.4]{betts:effective}. 
        (Here, the vector spaces $\rH^1_f(G_{\bQ}, \gr_{-k}^W U)$ are viewed as affine schemes over~$\bQ_p$.) The latter can happen only for $\ell \in S$ by \cite[Lemma~6.1.1]{betts:effective}. 
        This implies the upper bound
        on~$\dim \Sel_{\Sigma,U}$. By
        \cite[Corollary~3.1.11]{betts:effective}, the local Selmer scheme
        $\rH^1_f(G_p, U)$ is (non-canonically) isomorphic to
        $\prod_{k=1}^\infty \rH^1_f(G_p, \gr_{-k}^W U)$, which implies the
        claim on its dimension. 

        If~\eqref{eq: s-inequality} is
        satisfied, then $\Sel_{\Sigma,U}$ has strictly smaller dimension
        than $\rH^1_f(G_p, U)$ for every reduction type~$\Sigma$. The
        localisation map is thus not dominant, which implies the finiteness
        of~$\cY(\bZ_p)_{S,U}$.
\end{proof}

Part \ref{thm: depth 1 finiteness} of \Cref{thm: alpha} will follow from \Cref{lem: Selmer scheme
dimensions} applied to the quotient $U_Y^{\ab}$ of~$U_Y$. For
part~\ref{thm: depth 2 finiteness} we will construct an intermediate quotient of $U_{Y,2} \twoheadrightarrow U_{Y,1} = U_Y^{\ab}$. 

In order to guarantee the existence of a Coleman algebraic function of a
certain weight vanishing on $S$-integral points, as in
Theorem~\ref{thm: beta}, Betts defines the following Hilbert series in
$\bN_0[\![t]\!]$ associated to~$U$:
\begin{align*}
	\label{def: Hilbert series}
	\HS_{\glob}(t) &\coloneqq (1-t^2)^{-s} \prod_{k=1}^\infty
        (1-t^k)^{- \dim \rH^1_f(G_{\bQ}, \gr_{-k}^W U)},\\
	\HS_{\loc}(t) &\coloneqq \prod_{k=1}^\infty (1-t^k)^{- \dim
        \rH^1_f(G_p, \gr_{-k}^W U)}.
\end{align*}

Denote their coefficients by $c_i^{\glob}$ and $c_i^{\loc}$, respectively.
The weight filtration on $U$ by subgroups induces a weight filtration on the $\bQ_p$-algebra $\cO(U)$ \cite[Example~2.1.5]{betts:effective}, which in turn induces weight filtrations on the rings of functions of the global and local Selmer scheme. The coefficients $c_i^{\glob}$ and $c_i^{\loc}$ are upper bounds for, respectively are equal to, the dimensions of the weight-graded pieces of the latter:

\begin{lemma}
	\label{lem: hilbert series coefficients}
	For all $i \geq 0$ we have
	\begin{align*}
		\dim \gr_i^W \cO(\Sel_{\Sigma,U}) &\leq c_i^{\glob}, \\
		\dim \gr_i^W \cO(\rH^1_f(G_p, U)) &= c_i^{\loc}.
	\end{align*}
\end{lemma}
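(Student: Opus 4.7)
The approach is to identify both $\rH^1_f(G_p, U)$ and $\Sel_{\Sigma,U}$ as (closed subschemes of) ambient affine schemes whose weight-filtered coordinate rings have Hilbert series that can be read off directly as products of the Hilbert series of their factors.

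First I would record the general principle: if $V$ is a finite-dimensional $\bQ_p$-vector space placed in weight $-k$, then its coordinate ring $\cO(V)=\operatorname{Sym}(V^*)$ is naturally weight-graded with $V^*$ sitting in weight $k$, and its Hilbert series with respect to this grading is $(1-t^k)^{-\dim V}$. Hilbert series are multiplicative under Cartesian products of such affine schemes, so the Hilbert series of a product is the product of the individual Hilbert series. Moreover, a closed immersion of weight-filtered affine schemes corresponds to a surjection of weight-filtered coordinate rings, which induces a surjection of the associated weight-graded rings; this yields the inequality $\dim \gr_i^W \cO(\text{subscheme}) \le \dim \gr_i^W \cO(\text{ambient})$ for every~$i$. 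An isomorphism correspondingly yields an equality.

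For the local statement I would invoke Corollary~3.1.11 of~\cite{betts:effective}, which gives a (non-canonical) isomorphism of weight-filtered schemes $\rH^1_f(G_p, U) \cong \prod_{k=1}^\infty \rH^1_f(G_p, \gr_{-k}^W U)$. Multiplicativity then shows that the Hilbert series of $\cO(\rH^1_f(G_p, U))$ equals $\HS_{\loc}(t)$, whence equality on each weight-graded piece. For the global statement I would invoke Lemma~3.2.6 of~\cite{betts:effective}, which provides a closed embedding $\Sel_{\Sigma,U} \hookrightarrow \prod_{k=1}^\infty \rH^1_f(G_{\bQ}, \gr_{-k}^W U) \times \prod_{\ell \neq p} \fS_{\Sigma_\ell}$ of weight-filtered schemes. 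By Lemma~6.1.1 of~\cite{betts:effective} the factors at $\ell \notin S$ are points, contributing trivially, while by Lemma~6.1.4 the factors at $\ell \in S$ are genus-zero curves whose coordinate rings are polynomial on a single generator of weight~$2$, contributing a factor of $(1-t^2)^{-1}$ each. Multiplying yields that the Hilbert series of the ambient coordinate ring is precisely $\HS_{\glob}(t)$, and the surjection induced by the closed embedding gives $\dim \gr_i^W \cO(\Sel_{\Sigma,U}) \le c_i^{\glob}$.

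The main subtlety is checking that Betts' structural results really are compatible with the weight filtrations on coordinate rings, so that the graded-ring arguments apply, and in pinning down that the generator of $\cO(\fS_{\Sigma_\ell})$ for $\ell \in S$ really sits in weight~$2$ (which is why the factor $(1-t^2)^{-s}$, rather than $(1-t^k)^{-s}$ for some other $k$, appears in $\HS_{\glob}$). Once these structural inputs are taken as black boxes, the remainder of the argument is formal bookkeeping with Hilbert series of polynomial rings on generators of prescribed weights.
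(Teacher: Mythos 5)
Your proposal follows essentially the same route as the paper's proof: the local equality via the weight-compatible isomorphism of \cite[Corollary~3.1.11]{betts:effective} with $\prod_k \rH^1_f(G_p,\gr_{-k}^W U)$, and the global inequality via the closed embedding of \cite[Lemma~3.2.6]{betts:effective} into $\prod_k \rH^1_f(G_\bQ,\gr_{-k}^W U)\times\prod_{\ell\neq p}\fS_{\Sigma_\ell}$ together with multiplicativity of Hilbert series. The only small inaccuracy is your description of the factors $\fS_{\Sigma_\ell}$ for $\ell\in S$: they are genus-$0$ curves (by \cite[Lemma~6.1.4]{betts:effective}) whose coordinate rings need not be polynomial on a single weight-$2$ generator; what one actually uses is the coefficient-wise bound $\HS_{\fS_{\Sigma_\ell}}(t)\preceq(1-t^2)^{-1}$ from \cite[Lemma~6.1.5]{betts:effective}, which suffices since the global statement is only an inequality.
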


\begin{proof}[Proof (sketch)]
	For every affine scheme $X/\bQ_p$ whose ring of functions is equipped with a weight filtration $W_\bullet$ we can define its Hilbert series as the generating function of the dimensions of the weight-graded pieces:
	\[ \HS_X(t) \coloneqq \sum_{i=0}^\infty \dim (\gr_i^W \cO(X)) \, t^i \in \bN_0^\infty[\![t]\!]. \]
	The claim is thus equivalent to
	\begin{align*}
		\HS_{\Sel_{\Sigma,U}}(t) \preceq \HS_{\glob}(t),\\
		\HS_{\rH^1_f(G_p, U)}(t) = \HS_{\loc}(t),
	\end{align*}
	where $\preceq$ denotes coefficient-wise inequality.

	In the case where $X = V = \prod_{k=1}^\infty V_{-k}$ is a weight-graded $\bQ_p$-vector space, viewed as an affine $\bQ_p$-scheme, there is an induced weight filtration on $\cO(V) = \mathrm{Sym}(V)^\vee$, and the Hilbert series is given by
	\[ \HS_V(t) = \prod_{k > 0} \HS_{V_{-k}}(t) = \prod_{k > 0} (1-t^k)^{- \dim V_{-k}}. \]
	As in the proof of \Cref{lem: Selmer scheme dimensions}, the local Selmer scheme $\rH^1_f(G_p, U)$ is non-canonically isomorphic to $\prod_{k=1}^\infty \rH^1_f(G_p, \gr_{-k}^W U)$, compatibly with the weight filtrations on rings of functions if the $k$-th factor is placed in weight~$-k$. This implies the  equality of their Hilbert series, the latter being precisely~$\HS_{\loc}(t)$.
	
	The global Selmer scheme $\Sel_{\Sigma,U}$ is non-canonically a closed subscheme of $\prod_{k=1}^\infty
	\rH^1_f(G_{\bQ}, \gr_{-k}^W U) \times \prod_{\ell \neq p}
	\mathfrak{S}_{\Sigma_{\ell}}$ as above. This implies the inequality of Hilbert series
	\[ \HS_{\Sel_{\Sigma,U}} \preceq \prod_{k=1}^\infty
	(1-t^k)^{- \dim \rH^1_f(G_{\bQ}, \gr_{-k}^W U)} \cdot \prod_{\ell \neq p} \HS_{\mathfrak{S}_{\Sigma_{\ell}}}(t). \]
	For $\ell \not\in S$ the scheme $\mathfrak{S}_{\Sigma_{\ell}}$ is either empty or a single point \cite[Lemma~6.1.1]{betts:effective}. For $\ell \in S$ we have the coefficient-wise bound \cite[Lemma~6.1.5]{betts:effective}
	\[ \HS_{\mathfrak{S}_{\Sigma_{\ell}}}(t) \preceq (1-t^2)^{-1}. \]
	The bound $\HS_{\Sel_{\Sigma,U}}(t) \preceq \HS_{\glob}(t)$ follows.
\end{proof}

\begin{lemma}[{\cite[Theorem~6.2.1\,A)]{betts:effective}}]
	\label{lem: coleman function from Hilbert series}
	Assume that $\sum_{i=0}^m c_i^{\glob} < \sum_{i=0}^m c_i^{\loc}$ for some positive integer~$m$. Then for each reduction type~$\Sigma$, there exists a nonzero Coleman algebraic function of weight at most~$m$ that vanishes on $\cY(\bZ_p)_{S,U,\Sigma}$.
\end{lemma}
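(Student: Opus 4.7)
The plan is to find the Coleman algebraic function as the pullback along the unipotent Kummer map of an algebraic function on the local Selmer scheme that vanishes on the scheme-theoretic image of $\loc_p$, constructed by a dimension-counting argument on the weight filtration.

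First, I would observe that the localisation map $\loc_p \colon \Sel_{\Sigma,U} \to \rH^1_f(G_p, U)$ is a morphism of affine $\bQ_p$-schemes whose coordinate rings carry weight filtrations induced from the weight filtration on~$U$. Since these filtrations are functorial in the pair $(U, W_\bullet U)$ (cf.\ \cite[\S2, \S3]{betts:effective}), the pullback $\loc_p^*$ respects the weight filtrations and restricts to a $\bQ_p$-linear map
\[ \loc_p^* \colon W_m \cO(\rH^1_f(G_p, U)) \to W_m \cO(\Sel_{\Sigma,U}). \]
By Lemma~\ref{lem: hilbert series coefficients}, the source has dimension $\sum_{i=0}^m c_i^{\loc}$, while the target has dimension at most $\sum_{i=0}^m c_i^{\glob}$. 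Under the hypothesis $\sum_{i=0}^m c_i^{\glob} < \sum_{i=0}^m c_i^{\loc}$, this restricted map must therefore have a nontrivial kernel. Choose any nonzero element $f$ of this kernel: it is a weight-$\leq m$ algebraic function on $\rH^1_f(G_p, U)$ that vanishes on the scheme-theoretic image of~$\loc_p$.

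Next, I would pull $f$ back along the unipotent Kummer map $j_p \colon \cY(\bZ_p) \to \rH^1_f(G_p, U)(\bQ_p)$. That $j_p^* f$ is Coleman algebraic of weight at most $m$ is built into the definition of Coleman algebraic functions in \cite[\S4.1]{betts:effective}, which is arranged precisely so that $j_p^*$ is a filtered ring homomorphism from $\cO(\rH^1_f(G_p, U))$ into the algebra of Coleman analytic functions of Besser~\cite{Bes02}. Moreover, by the definition of the refined Chabauty--Kim locus, any $y \in \cY(\bZ_p)_{S,U,\Sigma}$ satisfies $j_p(y) \in \loc_p(\Sel_{\Sigma,U})$ scheme-theoretically, and hence $(j_p^* f)(y) = f(j_p(y)) = 0$.

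The main obstacle will be verifying that $j_p^* f$ is not identically zero as a Coleman function on $\cY(\bZ_p)$, since a nonzero element of the coordinate ring could in principle have zero pullback. What is needed is an injectivity statement for $j_p^*$ on the weight-$\leq m$ part of $\cO(\rH^1_f(G_p, U))$. This is the content of the nonvanishing assertion in \cite[Theorem~6.2.1\,A)]{betts:effective}, and it ultimately rests on an explicit description of $j_p$ in terms of iterated Coleman integrals together with the transcendence properties of such integrals arising from Besser's Tannakian formalism~\cite{Bes02}. Granting this injectivity, the pulled-back function $j_p^* f$ is the required nonzero Coleman algebraic function of weight at most $m$ vanishing on~$\cY(\bZ_p)_{S,U,\Sigma}$.
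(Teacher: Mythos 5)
Your proposal is correct and takes essentially the same route as the paper's own (sketched) proof: both find a nonzero $f$ in the kernel of the filtered pullback $\loc_p^\sharp$ restricted to $W_m$ via the dimension count from \Cref{lem: hilbert series coefficients}, and then take $f \circ j_p$. You are in fact slightly more explicit than the paper about the one genuinely nontrivial point --- that $j_p^\sharp f$ is not identically zero --- which both you and the paper ultimately defer to \cite[Theorem~6.2.1\,A)]{betts:effective}.
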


\begin{proof}[Proof (sketch)]
	For each reduction type~$\Sigma$, the pullback map along the localisation map 
	\[ \loc_p^\sharp\colon \cO(\rH^1_f(G_p, U)) \to \cO(\Sel_{\Sigma,U}) \]
	is filtered with respect to the weight filtrations on both rings, i.e., maps functions of weight at most~$m$ to functions of weight at most~$m$. The spaces of functions of bounded weight are finite-dimensional. If we have the inequality
	\begin{equation} 
		\label{eq: weight inequality}
		\dim W_m \cO(\Sel_{\Sigma,U}) < \dim W_m \cO(\rH^1_f(G_p, U)),
	\end{equation}
	there is a nonzero element~$f$ of $W_m \cO(\rH^1_f(G_p, U))$ such that $f \circ \loc_p = 0$, and then $f \circ j_p$ is a nonzero Coleman algebraic function on $\cY(\bZ_p)$ of weight at most~$m$ that vanishes on $\cY(\bZ_p)_{S,U,\Sigma}$. 
	
	By \Cref{lem: hilbert series coefficients} we have
	\begin{align*}
		\dim \gr_i^W \cO(\Sel_{\Sigma,U}) &\leq c_i^{\glob}, \\
		\dim \gr_i^W \cO(\rH^1_f(G_p, U)) &= c_i^{\loc}
	\end{align*} 
	for all $i \geq 0$. Hence the inequality~\eqref{eq: weight inequality} is satisfied whenever $\sum_{i=0}^m c_i^{\glob} < \sum_{i=0}^m c_i^{\loc}$.
\end{proof}

\section{Weight-graded pieces of \texorpdfstring{$U_Y^{\ab}$}{the abelianised fundamental group}}\label{sec: depth 1}

The inclusion $Y \hookrightarrow X$ induces a surjection of $\bQ_p$-pro-unipotent fundamental groups $U_Y \twoheadrightarrow U_X$ and hence a surjection on their abelianisations: $U_Y^{\ab} \twoheadrightarrow U_X^{\ab}$. The latter group is the rational $p$-adic Tate module of the Jacobian of~$X$:
\[ U_X^{\ab} = V_p \Jac_X = \bigl(\varprojlim_{k} \Jac_X[p^k](\Qbar)\bigr) \otimes_{\bZ_p} \bQ_p. \]
The inclusions of the cusps induce a map 
\begin{equation*}
I\coloneqq \bQ_p(1)^{D(\Qbar)}/\bQ_p(1) \to U_Y^{\ab}\,,
\end{equation*}
where $\bQ_p(1)^{D(\Qbar)}$ denotes the Galois module of maps $D(\Qbar) \to \bQ_p(1)$, and $\bQ_p(1)$ is embedded diagonally, i.e.\ as the constant maps.
This yields a short exact sequence
\[ 1 \lto \underbrace{I}_{\gr_{-2}^W U_Y^{\ab}} \lto U_Y^{\ab} \lto \underbrace{V_p \Jac_X}_{\gr_{-1}^W U_Y^{\ab}} \lto 1, \]
which exhibits $U_Y^{\ab}$ as an extension of $V_p \Jac_X$ in weight~$-1$
by $I$ in weight~$-2$. We now calculate the
dimensions of the global and local Galois cohomology groups of these
weight-graded pieces.

First, the Galois cohomology of the Tate module follows from the work of
Bloch--Kato~\cite[Section~3]{BK90}, see~\cite[Theorem 3.11,
Corollary~3.12]{Cor21}:
\begin{align}
  \dim \rH^1_f(G_{\bQ}, V_p \Jac_X) &= r_p, \label{item: Tate module global}\\ 
                \dim \rH^1_f(G_p, V_p \Jac_X) &= g.  \label{item: Tate module local}
\end{align}

We now compute the Galois cohomology of the cuspidal inertia $I$.
\begin{lemma}[Galois cohomology of cuspidal inertia]
	\label{lem: Galois cohomology of cuspidal inertia}
	\leavevmode
	\begin{enumerate}[label=(\alph*)]
		\item $\dim \rH^1_f(G_{\bQ}, I) = n_1 + n_2 - \#|D|$,
		\label{item: inertia global}
		\item $\dim \rH^1_f(G_p, I) = n-1$.
		\label{item: inertia local}
	\end{enumerate}
\end{lemma}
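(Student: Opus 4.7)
My plan is to observe that the defining extension
\[
	0 \to \bQ_p(1) \to P \to I \to 0, \qquad P := \bQ_p(1)^{D(\Qbar)},
\]
splits Galois-equivariantly, which reduces both statements to the standard Bloch--Kato computation for $\bQ_p(1)$ over number fields, respectively $p$-adic local fields, combined with Shapiro's lemma.

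To exhibit the splitting, I would fix a finite Galois extension $K/\bQ$ over which every geometric cusp becomes rational and set $\Pi = \Gal(K/\bQ)$. Then $\bQ_p^{D(\Qbar)}$ is the permutation representation of the finite group $\Pi$, so by Maschke's theorem the diagonal inclusion $\bQ_p \hookrightarrow \bQ_p^{D(\Qbar)}$ admits a $\Pi$-equivariant retraction. Twisting by $\bQ_p(1)$ produces a $G_{\bQ}$-equivariant (and, by restriction, $G_p$-equivariant) splitting $P \cong \bQ_p(1) \oplus I$. It therefore suffices to compute $\dim \rH^1_f(G_{\bQ}, P)$ and $\dim \rH^1_f(G_p, P)$ and to subtract $\dim \rH^1_f(G_{\bQ}, \bQ_p(1))$ and $\dim \rH^1_f(G_p, \bQ_p(1))$, respectively.

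For part (a), I would decompose $D$ into Galois orbits (one per closed point $x \in |D|$) and apply Shapiro's lemma, which is compatible with the Bloch--Kato local conditions, to obtain
\[
	\rH^1_f(G_{\bQ}, P) \;\cong\; \bigoplus_{x \in |D|} \rH^1_f(G_{\bQ(x)}, \bQ_p(1)) \;\cong\; \bigoplus_{x \in |D|} \cO_{\bQ(x)}^\times \otimes_{\bZ} \bQ_p,
\]
where the last isomorphism is the standard Bloch--Kato computation \cite[Section~3]{BK90}. By Dirichlet's unit theorem this space has $\bQ_p$-dimension $\sum_{x \in |D|}(r_1(\bQ(x)) + r_2(\bQ(x)) - 1) = n_1 + n_2 - \#|D|$. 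Combined with $\rH^1_f(G_{\bQ}, \bQ_p(1)) = \bZ^\times \otimes_{\bZ} \bQ_p = 0$ and the splitting from the previous paragraph, this gives (a).

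For part (b), the same template works locally. Restricting each induced factor $\mathrm{Ind}_{G_{\bQ(x)}}^{G_{\bQ}} \bQ_p(1)$ to $G_p$, Mackey's formula decomposes it as $\bigoplus_{v \mid p} \mathrm{Ind}_{G_{\bQ(x)_v}}^{G_p} \bQ_p(1)$, where $v$ runs over the places of $\bQ(x)$ above $p$. Local Shapiro, together with the local Bloch--Kato identification $\rH^1_f(G_L, \bQ_p(1)) \cong \cO_L^\times \otimes_{\bZ} \bQ_p$ of $\bQ_p$-dimension $[L:\bQ_p]$ for any finite extension $L/\bQ_p$, produces $\dim \rH^1_f(G_p, P) = \sum_{x} \sum_{v \mid p} [\bQ(x)_v : \bQ_p] = \sum_{x} [\bQ(x):\bQ] = n$. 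Subtracting $\dim \rH^1_f(G_p, \bQ_p(1)) = 1$ yields $n - 1$, as claimed. The main point to justify carefully throughout is the compatibility of Shapiro's isomorphism with the Bloch--Kato local conditions, namely that the induced local condition at a place $v$ of $\bQ$ matches the product of local conditions at the places $w \mid v$ of $\bQ(x)$; this is standard but warrants explicit mention.
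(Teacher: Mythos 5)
Your proposal is correct and follows essentially the same route as the paper: the paper splits the sequence $0 \to \bQ_p(1) \to \bQ_p(1)^{D(\Qbar)} \to I \to 0$ via the explicit averaging retraction (the same Maschke-type splitting you invoke), identifies $\rH^1(G_{\bQ}, \bQ_p(1)^{D(\Qbar)})$ with $\bigoplus_{x \in |D|}\rH^1(\kappa(x),\bQ_p(1))$ (Shapiro), and then uses the Dirichlet unit theorem globally and $\dim_{\bQ_p}\bigl(\widehat{\cO_{L}^\times}\otimes\bQ_p\bigr)=[L:\bQ_p]$ locally. The only cosmetic differences are your explicit appeal to Mackey's formula for the local restriction (which the paper phrases as a sum over $|D_{\bQ_p}|$) and your use of $\cO_{\kappa(x)}^\times\otimes\bQ_p$ in place of the $p$-adic completion, which has the same dimension.
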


\begin{proof}
	The sequence of Galois representations
	\[ 0 \lto \bQ_p(1) \lto \bQ_p(1)^{D(\Qbar)} \lto I \lto 0 \]
	is split exact: a retraction $\bQ_p(1)^{D(\Qbar)} \to \bQ_p(1)$ is given by the averaging map $(a_x)_{x \in D(\Qbar)} \mapsto \frac1{n} \sum_x a_x$. Hence it remains exact after applying $\rH^1_f(G_{\bQ}, -)$ or $\rH^1_f(G_p,-)$ and we have
	\begin{align*} 
		\dim \rH^1_f(G_{\bQ}, I) &= \dim \rH^1_f(G_{\bQ}, \bQ_p(1)^{D(\Qbar)}) - \dim \rH^1_f(G_{\bQ}, \bQ_p(1)),\\
		\dim \rH^1_f(G_p, I) &= \dim \rH^1_f(G_p, \bQ_p(1)^{D(\Qbar)}) - \dim \rH^1_f(G_p, \bQ_p(1)).
	\end{align*}
	
	To calculate the cohomology of~$\bQ_p(1)^{D(\Qbar)}$, note that $\bQ_p(1)^{D(\Qbar)} = (\pi_D)_* \bQ_p(1)$, where $\pi_D\colon D \to \Spec(\bQ)$ is the structural morphism. We get
	\[ \rH^1(G_{\bQ}, \bQ_p(1)^{D(\Qbar)}) = \rH^1(D, \bQ_p(1)) = \bigoplus_{x \in |D|} \rH^1(\kappa(x), \bQ_p(1)), \]
	where $\kappa(x)$ is the residue field of the cusp~$x$. This implies
	\[ \rH^1_f(G_{\bQ}, \bQ_p(1)^{D(\Qbar)}) = \bigoplus_{x \in |D|} \rH^1_f(\kappa(x), \bQ_p(1)), \]
	where $\rH^1_f(\kappa(x), \bQ_p(1))$ is the subspace of cohomology classes that are crystalline at all places of~$\kappa(x)$ dividing~$p$, and unramified at all other places. These are precisely the classes in the image of
	\[ \widehat{\cO_{\kappa(x)}^\times} \otimes \bQ_p \hookrightarrow \widehat{\kappa(x)^\times} \otimes \bQ_p = \rH^1(\kappa(x), \bQ_p(1)), \]
	where $\widehat{M} \coloneqq \varprojlim M/p^k M$ denotes the $p$-adic completion. By the Dirichlet Unit Theorem, we have
	\[ \dim_{\bQ_p} (\widehat{\cO_{\kappa(x)}^\times} \otimes \bQ_p) = r_1(x) + r_2(x) - 1 \]
	with $r_1(x)$ and $r_2(x)$ denoting the number of real embeddings, respectively pairs of complex embeddings of~$\kappa(x)$. Taking everything together we find
	\[ \dim_{\bQ_p} \rH^1_f(G_{\bQ}, \bQ_p(1)^{D(\Qbar)}) = \sum_{x \in |D|} (r_1(x) + r_2(x) -1) = n_1 + n_2 - \#|D|. \]
	Together with
	$\rH^1_f(G_{\bQ}, \bQ_p(1)) =0$,
	this implies~\ref{item: inertia global}. 
	
	For the local cohomology group, we calculate:
	\[
		\rH^1_f(G_p, \bQ_p(1)^{D(\Qbar)}) = \bigoplus_{x \in |D_{\bQ_p}|} \rH^1_f(\kappa(x), \bQ_p(1)) 
		= \bigoplus_{x \in |D_{\bQ_p}|} \widehat{\cO_{\kappa(x)}^\times} \otimes \bQ_p
	\]
	whose dimension is
	\begin{align*}
		\dim_{\bQ_p} \rH^1_f(G_p, \bQ_p(1)^{D(\Qbar)}) &= \sum_{x \in |D_{\bQ_p}|} \dim_{\bQ_p} \bigl(\widehat{\cO_{\kappa(x)}^\times} \otimes \bQ_p \bigr)\\
		&= \sum_{x \in |D_{\bQ_p}|} [\kappa(x) : \bQ_p] \\
		&= n.
	\end{align*}
	Since
	$\dim_{\bQ_p} \rH^1_f(G_p, \bQ_p(1)) =1$,
	this implies~\ref{item: inertia local}.
\end{proof}

\begin{rem}
	The proof of \Cref{lem: Galois cohomology of cuspidal inertia} simplifies if we assume that all cusps are rational: in this case $\bQ_p(1)^{D(\Qbar)}$ is a direct sum of copies of~$\bQ_p(1)$ rather than a twisted form of it.
\end{rem}

Having calculated the dimensions of Galois cohomology of the weight-graded pieces of $U_Y^{\ab}$, we can now prove part \ref{thm: depth 1 finiteness} of \Cref{thm: alpha}.

\begin{proof}[Proof of \Cref{thm: alpha}\ref{thm: depth 1 finiteness}]
	By \Cref{lem: Selmer scheme dimensions}, $\cY(\bZ_p)_{S,1}$ is finite whenever
	\begin{align*}
		0 &< \sum_{k=1}^2 \bigl( \dim_{\bQ_p} \rH^1_f(G_p,
                \gr^W_{-k} U_Y^{\ab}) - \rH^1_f(G_{\bQ}, \gr^W_{-k}
                U_Y^{\ab}) \bigr) - s \\
		&= (g - r_p) + ((n-1) - (n_1 + n_2 - \#|D|)) - s &
                \text{(\eqref{item: Tate module global},~\eqref{item: Tate module local} \& Lemma~\ref{lem: Galois cohomology of cuspidal inertia})} \\
		&= g - r_p + \#|D| + n_2 -1 -s. & \qedhere
	\end{align*}
\end{proof}

\begin{rem}
	Instead of working with the full abelianisation $U_Y^{\ab}$, we can form the pushout along the $G_{\bQ}$-equivariant map 
	\[ \bQ_p(1)^{D(\Qbar)}/\bQ_p(1) = \bigl( \bigoplus_{x \in |D|} \bQ_p(1)^{x(\Qbar)}\bigr) / \bQ_p(1) \twoheadrightarrow \bigl( \bigoplus_{x \in |D|} \bQ_p(1) \bigr)/\bQ_p(1) \]
	that takes the average on each Galois orbit of cusps. This
        produces an extension $U$ of $V_p \Jac_X$ by $\bQ_p(1)^{|D|} / \bQ_p(1)$. The cohomology of the latter is easier to calculate:
	\begin{align*}
		\dim \rH^1_f(G_{\bQ}, \bQ_p(1)^{|D|} / \bQ_p(1)) &= 0, \\
		\dim \rH^1_f(G_p, \bQ_p(1)^{|D|} / \bQ_p(1)) &= \#|D| - 1.
	\end{align*}
	The refined Chabauty--Kim locus $\cY(\bZ_p)_{S,U}$ associated to
        this quotient~$U$ is finite whenever $0 < g - r_p + \#|D| - 1 - s$. This is more restrictive compared to the full abelianisation~$U_Y^{\ab}$, where the right hand side contains an additional summand of~$n_2$. But if all cusps are real, i.e.\ if $n_2 = 0$, the conditions ensuring finiteness actually agree.
        
It would be interesting to study in more detail what kind of Coleman
functions appear in the equations defining the locus
$\cY(\bZ_p)_{S,U}$ (and similarly for the full abelianisation
$U^{\ab}_Y$).
In addition to the weight~1 functions arising from $U_X$, these are of
weight~2 by the above.
For instance, for the thrice-punctured line
        $\cY \coloneqq \bP^1_{\bZ[1/2]} \smallsetminus \{0,1,\infty\}$ 
        we have that
$\cY(\bZ_p)_{\{2\},1,\Sigma}$ is cut out by the weight 2 function
  $\log(z)$ for some reduction type $\Sigma$ by~\cite[Proposition~3.1]{BBKLMQSX}.
\end{rem}

\section{The abelian-by-Artin--Tate quotient}
\label{sec: depth 2}

The key step in the proofs of \Cref{thm: alpha}\ref{thm: depth 2 finiteness} and of \Cref{thm: beta} is the construction of a suitable quotient $U_Y \twoheadrightarrow U$ of the fundamental group of~$Y$, which is inspired by the proof of \cite[Proposition 2.2]{BD-II}. It lies between the depth~1 and depth~2 quotient,
\[ U_{Y,2} \twoheadrightarrow U \twoheadrightarrow U_{Y,1} = U_Y^{\ab}, \]
and can be described as the largest quotient of $U_{Y}$ of weight~$\geq -2$ which is a central extension of an abelian group by an Artin--Tate representation.\footnote{We call a $G_\bQ$-representation \emph{Artin--Tate} if it is a Tate twist of an Artin (i.e.\ finite image) representation.}

\begin{lemma}
	\label{depth 2 quotient}
	There exists a $G_{\bQ}$-equivariant quotient $U_Y
        \twoheadrightarrow U$ that is a central extension of $V_p \Jac_X$ by $(\bQ_p\otimes\NS(\Jac_{X_{\Qbar}}))^\vee(1) \oplus \bQ_p(1)^{D(\Qbar)} / \bQ_p(1)$. 
\end{lemma}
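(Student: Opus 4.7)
The plan is to construct $U$ as a two-step quotient of $U_{Y,2}$, mimicking the projective-case construction of Balakrishnan--Dogra \cite[Proposition~2.2]{BD-II} while incorporating the cuspidal inertia representation $I = \bQ_p(1)^{D(\Qbar)}/\bQ_p(1)$ from \Cref{sec: depth 1}. The first step passes to the weight-$\geq -2$ quotient of $U_{Y,2}$; the second performs a pushout along the Néron--Severi cycle class map.

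First, because $Y$ is affine, $\pi_1(Y_\bC)$ is a free group and hence its $\bQ_p$-pro-unipotent completion $U_Y$ is a free pro-unipotent group; equivalently, $H_2(U_Y,\bQ_p) = H_2(Y,\bQ_p) = 0$. By Hopf's formula, the canonical map $\wedge^2 U_Y^{\ab} \to \gamma_2 U_{Y,2}$ is a $G_\bQ$-equivariant isomorphism, so $U_{Y,2}$ sits in a central extension
\[
1 \to \wedge^2 U_Y^{\ab} \to U_{Y,2} \to U_Y^{\ab} \to 1.
\]
The weight filtration on $U_Y^{\ab}$ (with $W_{-2} = I$ and $\gr^W_{-1} = V_p\Jac_X$) induces a filtration on $\wedge^2 U_Y^{\ab}$ with graded pieces $\wedge^2 V_p\Jac_X$, $V_p\Jac_X \otimes I$ and $\wedge^2 I$ in weights $-2,-3,-4$. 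Set $U' \coloneqq U_{Y,2}/W_{-3}U_{Y,2}$. Then $\gr^W_{-2} U_{Y,2} = I \oplus \wedge^2 V_p\Jac_X$ and $W_{-3} U' = 0$, so one obtains a short exact sequence of pro-unipotent $G_\bQ$-groups
\[
1 \to I \oplus \wedge^2 V_p\Jac_X \to U' \to V_p\Jac_X \to 1.
\]
This extension is central, because the weight filtration is compatible with the group commutator: any bracket between a weight-$-1$ element and a weight-$-2$ element lies in $W_{-3}U' = 0$.

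Second, I construct a $G_\bQ$-equivariant surjection $\wedge^2 V_p\Jac_X \twoheadrightarrow (\bQ_p \otimes \NS(\Jac_{X_{\Qbar}}))^\vee(1)$ by dualising the Néron--Severi cycle class embedding
\[
(\bQ_p \otimes \NS(\Jac_{X_{\Qbar}}))(-1) \hookrightarrow H^2_\et(\Jac_{X_{\Qbar}}, \bQ_p) = \wedge^2 H^1_\et(\Jac_{X_{\Qbar}}, \bQ_p) = \wedge^2 V_p\Jac_X^\vee.
\]
Pushing out the central extension defining $U'$ along the induced surjection $I \oplus \wedge^2 V_p\Jac_X \twoheadrightarrow I \oplus (\bQ_p \otimes \NS(\Jac_{X_{\Qbar}}))^\vee(1)$ produces the desired quotient $U$.

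The main technical obstacle is to pin down the identification $\gr^W_{-2} U_{Y,2} = I \oplus \wedge^2 V_p\Jac_X$ as $G_\bQ$-modules and to verify centrality of $W_{-2}U'$ in $U'$. Both reduce to the fact that the weight filtration on $U_Y$ is compatible with the commutator bracket (so that the induced filtration on $U_{Y,2}$ has the predicted graded pieces), combined with the freeness of $U_Y$ for affine $Y$; these are standard consequences of the mixed structure on the pro-unipotent fundamental group of a smooth curve. Once they are established, the remaining steps---identifying the extension as central, dualising the cycle class, and forming the pushout---are purely formal.
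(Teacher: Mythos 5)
Your proposal is correct and follows essentially the same route as the paper: pass to the weight~$\geq -2$ quotient of $U_Y$ (a central extension of $V_p \Jac_X$ by $\bigwedge^2 V_p \Jac_X \oplus \bQ_p(1)^{D(\Qbar)}/\bQ_p(1)$) and push out along a $G_{\bQ}$-equivariant surjection $\bigwedge^2 V_p \Jac_X \twoheadrightarrow (\bQ_p\otimes\NS(\Jac_{X_{\Qbar}}))^\vee(1)$. The only cosmetic differences are that you justify the shape of this quotient via freeness of $U_Y$ and Hopf's formula (and note, correctly, that the splitting of $\gr^W_{-2}$ and the centrality follow from semisimplicity of the pure graded pieces and compatibility of the weight filtration with commutators), whereas the paper quotes the known weight-graded structure directly, and you obtain the N\'eron--Severi surjection by dualising the cycle class map where the paper cites \cite[Lemma~2.10]{BCL:effective} for the identification of $\bQ_p\otimes\NS(\Jac_{X_{\Qbar}})$ with a subspace of $\Hom(\bigwedge^2 V_p \Jac_X, \bQ_p(1))$.
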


\begin{proof}
	We construct $U$ as a quotient of the weight~$\geq -2$ quotient $U_Y / W_{-3} U_Y$, which is a central extension as follows
	\begin{align}
		\label{eq: weight -2 extension}
		1 \lto \underbrace{\bigwedge^2 V_p \Jac_X \; \oplus \; \bQ_p(1)^{D(\Qbar)} / \bQ_p(1)}_{\gr^W_{-2} U_Y} \lto U_Y/W_{-3} U_Y \lto \underbrace{V_p \Jac_X}_{\gr^W_{-1} U_Y} \lto 1.
	\end{align}
	
	The proof of \cite[Lemma 2.10]{BCL:effective} identifies $\bQ_p\otimes\NS(\Jac_{X_{L}})$ with $\Hom_{G_{L}}(\bigwedge^2 V_p \Jac_X, \bQ_p(1))$ for any number field $L$ and thus 
	\begin{align} \label{eq: NS wedge2}
		\bQ_p\otimes\NS(\Jac_{X_{\Qbar}}) \cong \colim_L \Hom_{G_{L}}(\bigwedge^2 V_p \Jac_X, \bQ_p(1)) \subset \Hom(\bigwedge^2 V_p \Jac_X, \bQ_p(1)).
	\end{align}
	Therefore we have a $G_{\bQ}$-equivariant surjection $\bigwedge^2 V_p \Jac_X \twoheadrightarrow (\bQ_p\otimes\NS(\Jac_{X_{\Qbar}}))^\vee(1)$.
	Since the extension~\eqref{eq: weight -2 extension} is central, the kernel is normal in $U_Y/W_{-3} U_Y$ and we can form a pushout:
	\begin{equation}
	\label{diag: def depth 2 quot}
	\begin{tikzcd}[column sep = small]
		1 \rar & \gr_{-2}^W U_Y \dar[->>] \rar & U_Y / W_{-3} U_Y \dar[->>] \rar & \gr_{-1}^W U_Y \dar[equals] \rar & 1 \\
		1 \rar & (\bQ_p\otimes\NS(\Jac_{X_{\Qbar}}))^\vee(1) \oplus \bQ_p(1)^{D(\Qbar)} /
                \bQ_p(1) \rar & U \rar & V_p \Jac_X \rar & 1. 
	\end{tikzcd}
	\end{equation}
	The resulting quotient~$U$ of $U_Y$ is the desired extension.
\end{proof}

\begin{rem}
	\label{rem: geometric origin}
	As pointed out to us by Alex Betts, the quotient $U$ constructed in \Cref{depth 2 quotient} is of geometric origin, in that there exists a smooth connected variety $E/\bQ$ whose $\bQ_p$-pro-unipotent fundamental group is equal to $U$, and there is a morphism $f\colon Y \to E$ which induces the quotient map $f_*\colon U_Y \twoheadrightarrow U_E = U$. This variety can be constructed as a torsor $E \to J_X$ under a torus~$T$. The pullback of this torsor along the Abel--Jacobi map is trivial, giving rise to the morphism $f\colon Y \to E$. This construction generalises the $\bG_m^{\rho-1}$-torsor over $J_X$ of Edixhoven--Lido \cite{EL21} in their geometric quadratic Chabauty method. The generalisation is twofold: firstly, $T$ may be a non-split torus (so the fundamental group of~$T$ is Artin--Tate rather than Tate); secondly, our curve~$Y$, in contrast with the setting of Edixhoven--Lido, is affine rather than projective (so $T$ contains the toric part of the generalised Jacobian of~$Y$).
\end{rem}

The group $U$ from \Cref{depth 2 quotient} sits between $U_{Y,2}$ and $U_{Y,1}$ as follows:
\[ U_{Y,2} \twoheadrightarrow U_Y / W_{-3} U_Y \twoheadrightarrow U \twoheadrightarrow U_{Y,1}. \]
In particular, we have inclusions of Chabauty--Kim loci
\[ \cY(\bZ_p)_{S,2} \subseteq \cY(\bZ_p)_{S,U} \subseteq \cY(\bZ_p)_{S,1}. \]
In order to prove \Cref{thm: alpha}\ref{thm: depth 2 finiteness}
and~\Cref{thm: beta}, we need two preparatory lemmas that allow us to calculate the Selmer dimensions in weight $-2$.

\begin{lemma}
	\label{Selmer inf-res}
	\leavevmode
	\begin{enumerate}[label=(\alph*)]
		\item Let $K$ be a finite extension of $\bQ_\ell$ and $L/K$ a finite Galois extension with Galois group $G$.
		Let $V$ be a representation of $G_K$ on a finite dimensional $\bQ_p$-vector space ($\ell=p$ is allowed).
		Then the restriction is an isomorphism
		\[
			\rH^1_f(G_K,V) \cong \rH^1_f(G_L,V)^G.
		\]
		\label{item: inf-res local}
		\item Let $K$ be a number field and $L/K$ a finite Galois extension with Galois group $G$.
		Let $V$ be a representation of $G_K$ on a finite dimensional $\bQ_p$-vector space.
		Then the restriction is an isomorphism
		\[
		\rH^1_f(G_K,V) \cong \rH^1_f(G_L,V)^G.
		\]
		\label{item: inf-res global}
	\end{enumerate}
\end{lemma}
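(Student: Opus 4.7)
Both parts rest on a single core observation: inflation-restriction combined with vanishing of group cohomology for the finite group $G$ acting on a $\bQ_p$-vector space. Concretely, since $V^{G_L}$ is a $\bQ_p$-vector space, multiplication by $|G|$ is an automorphism and hence $\rH^i(G, V^{G_L}) = 0$ for all $i > 0$. The five-term inflation-restriction sequence
\[
0 \to \rH^1(G, V^{G_L}) \to \rH^1(G_K, V) \to \rH^1(G_L, V)^G \to \rH^2(G, V^{G_L})
\]
then collapses to an isomorphism $\rH^1(G_K, V) \xrightarrow{\sim} \rH^1(G_L, V)^G$ in both the local and global settings. It remains to check that this identification restricts to an isomorphism on the Bloch--Kato subspaces.

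For part (a) with $\ell = p$, I would use the defining sequence $\rH^1_f(G_K, V) = \ker(\rH^1(G_K, V) \to \rH^1(G_K, V \otimes_{\bQ_p} \Bcris))$. Applying the same vanishing argument to the $G_K$-module $V \otimes \Bcris$---whose $G_L$-invariants again form a $\bQ_p$-vector space on which $|G|$ acts invertibly---produces an isomorphism $\rH^1(G_K, V \otimes \Bcris) \xrightarrow{\sim} \rH^1(G_L, V \otimes \Bcris)^G$. A diagram chase on the comparison of the two defining sequences (using left exactness of $G$-invariants) identifies the kernels. For $\ell \neq p$, where $\rH^1_f$ coincides with the unramified cohomology $\ker(\rH^1(G_K, V) \to \rH^1(I_K, V))$, the same strategy applies to the pair $I_L \subseteq I_K$: since $L/K$ is Galois, $I_L = I_K \cap G_L$ is normal in $I_K$ with quotient injecting into $G$ and hence finite. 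Inflation-restriction again yields an injection $\rH^1(I_K, V) \hookrightarrow \rH^1(I_L, V)$, from which one reads off that a class over $G_K$ is unramified if and only if its restriction to $G_L$ is.

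For part (b) I would reduce to part (a) place by place. The global $\rH^1_f$ is cut out by local conditions: $c \in \rH^1_f(G_K, V)$ if and only if for every place $v$ of $K$ the localisation $c|_{G_{K_v}}$ lies in $\rH^1_f(G_{K_v}, V)$. Given the global isomorphism $\rH^1(G_K, V) \cong \rH^1(G_L, V)^G$ from the first paragraph, it suffices to check that this local condition at $v$ holds for $c$ if and only if the corresponding condition at every place $w \mid v$ of $L$ holds for the restriction $c|_{G_L}$. This is precisely the content of part (a) applied to the local extension $L_w/K_v$, whose Galois group is the decomposition group $G_w \subseteq G$.

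The main technical point needing care is the $\ell = p$ case of part (a): $V \otimes \Bcris$ is infinite-dimensional over $\bQ_p$, so one must verify that the inflation-restriction argument is valid for continuous cohomology with such coefficients. The essential input, however, is only that $|G|$ acts invertibly on $(V \otimes \Bcris)^{G_L}$, which is immediate from its $\bQ_p$-vector space structure, and this makes the usual cohomological vanishing (and hence the entire argument) go through.
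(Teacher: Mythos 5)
Your proposal is correct and follows essentially the same route as the paper: the five-term inflation--restriction sequence with vanishing of $\rH^i(G,V^{G_L})$ (since $\#G$ acts invertibly on a $\bQ_p$-vector space), followed by the same argument applied to $I_L\subseteq I_K$ (for $\ell\neq p$) and to $V\otimes_{\bQ_p}\Bcris$ (for $\ell=p$) to match up the Bloch--Kato subspaces, and a place-by-place reduction of the global case to the local one via decomposition groups. The paper packages the final identification as an application of the Four Lemma and handles the continuity issue you flag by citing Thomas's inflation--restriction for continuous cohomology, but the substance is identical.
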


\begin{proof}
We use inflation--restriction, which also works for continuous group cohomology by \cite[Theorem 10.26]{othomas}.
Let us start with \ref{item: inf-res local}.
Then we have the exact sequence
\begin{align*}
0 \lto \rH^1(G,V^{G_L}) \lto \rH^1(G_K,V) \lto \rH^1(G_L,V)^G \lto \rH^2(G,V^{G_L}).
\end{align*}
Multiplication by $\# G$ is the zero map on $\rH^i(G, V^{G_L})$ for all $i
  \geq 1$, but it is also an isomorphism, since $V^{G_L}$ is a $\bQ_p$-vector space.
Thus restriction is an isomorphism
\[
	\rH^1(G_K,V) \cong \rH^1(G_L,V)^G.
\]

To see that this isomorphism restricts to an isomorphism on $\rH^1_f$, we distinguish the cases $\ell\neq p$ and $\ell=p$.
If $\ell\neq p$, an analogous inflation--restriction argument yields an isomorphism
\[
  \rH^1(I_K,V) \cong \rH^1(I_L,V)^{I_{L/K}},
\]
  where $I_K\subset G_K$, $I_L = I_K \cap G_L\subset G_L$ and $I_{L/K} \subset G$ are the inertia subgroups.
By the definition of $\rH^1_f$ for $\ell\neq p$, the rows in the following commutative diagram are exact:
\begin{equation}
\label{diag: 4 lemma}
\begin{tikzcd}[column sep = small]
0 \dar[equals] \rar & \rH^1_f(G_K,V) \dar \rar & \rH^1(G_K,V) \dar["\sim"',"\res"] \rar & \rH^1(I_K,V) \dar[hook] \\
0 \rar & \rH^1_f(G_L,V)^G \rar & \rH^1(G_L,V)^G \rar & \rH^1(I_L,V) . 
\end{tikzcd}
\end{equation}
We conclude by using the Four Lemma.
The case $\ell=p$ is similar, replacing the right vertical arrow in \eqref{diag: 4 lemma} by $\rH^1(G_K,V\otimes_{\bQ_p} \Bcris) \to \rH^1(G_L,V\otimes_{\bQ_p} \Bcris)$ and arguing as before.

For \ref{item: inf-res global}, inflation--restriction yields an isomorphism
\[
\rH^1(G_K,V) \cong \rH^1(G_L,V)^G
\]
in the same manner as above. 
Using the definition of the global Selmer groups, the rows in the following commutative diagram are exact:
\begin{equation}
\label{diag: 4 lemma global}
\begin{tikzcd}[column sep = small]
0 \dar[equals] \rar & \rH^1_f(G_K,V) \dar \rar & \rH^1(G_K,V) \dar["\sim"',"\res"] \rar & \prod_v \rH^1(G_{K_v},V)/\rH^1_f(G_{K_v},V) \dar["\prod_v\prod_{w|v} \res^{L_w}_{K_v}"] \\
0 \rar & \rH^1_f(G_L,V)^G \rar & \rH^1(G_L,V)^G \rar & \prod_v \prod_{w|v} \rH^1(G_{L_w},V)/\rH^1_f(G_{L_w},V),
\end{tikzcd}
\end{equation}
where $v$ runs over the finite places of $K$, 
and the horizontal arrows on the right are given by restricting to all decomposition groups.
Using part \ref{item: inf-res local} for every local Galois extension
  $L_w/K_v$, with $\Gal(L_w/K_v)\subset G$ as the decomposition group at
  $w$, we see that the rightmost vertical arrow in \eqref{diag: 4 lemma global} is injective.
So we apply the Four Lemma again to finish the proof.
\end{proof}

\begin{lemma}
	\label{Selmer dim AT}
	Let $W$ be a finite-dimensional $\bQ_p$-representation of $G_{\bQ}$ such that $W^\vee(1)$ is an Artin representation, i.\,e.\ has finite image.
	Then
	\leavevmode
	\begin{enumerate}[label=(\alph*)]
		\item $\dim \rH^1_f(G_p, W) = \dim W$,
		\label{item: AT Selmer local}
		\item $\dim \rH^1_f(G_{\bQ}, W) = -\dim \rH^0(G_{\bQ},W^\vee(1)) + \dim W - \dim W^{\sigma=1}$.
		\label{item: AT Selmer global}
	\end{enumerate}
\end{lemma}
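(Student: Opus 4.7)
The plan is to derive both parts from the Bloch--Kato dimension formulas for local and global Selmer groups, using \Cref{Selmer inf-res} to handle the Artin contributions.

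Since $A \coloneqq W(-1)$ is the dual of the Artin representation $W^{\vee}(1)$, it is itself Artin, and $W \cong A(1)$. I would first record three auxiliary facts:
\begin{enumerate}[label=(\roman*)]
  \item $H^0(G_K, W) = 0$ for $K \in \{\bQ, \bQ_p\}$: a nonzero $G_K$-invariant would give an embedding $\bQ_p(-1) \hookrightarrow A$, but every subrepresentation of $A$ has finite image while the cyclotomic character does not.
  \item $W$ is de Rham at $p$ with all Hodge--Tate weights equal to $-1$, since $A$ is potentially crystalline of Hodge--Tate weight $0$. Consequently $\mathrm{Fil}^0 D_{\mathrm{dR}}(W|_{G_p}) = 0$, so $\dim_{\bQ_p} D_{\mathrm{dR}}(W|_{G_p})/\mathrm{Fil}^0 = \dim W$.
  \item $H^1_f(G_{\bQ}, A') = 0$ for every Artin representation $A'$ of $G_{\bQ}$. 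By \Cref{Selmer inf-res}\ref{item: inf-res global} applied to a finite Galois extension $L/\bQ$ trivializing $A'$, the group $H^1_f(\bQ, A')$ injects into $H^1_f(G_L, \bQ_p)^{\dim A'}$. The space $H^1_f(G_L, \bQ_p)$ parametrises continuous $\bQ_p$-valued characters of $G_L$ that are unramified at every place (since for the trivial representation ``crystalline at $p$'' coincides with ``unramified at $p$''); this vanishes by class field theory and the finiteness of $\mathrm{Cl}(L)$.
\end{enumerate}

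Part (a) then follows by specialising the Bloch--Kato local dimension formula (e.g.\ \cite[Proposition~2.8]{bellaicheBK}):
\[ \dim_{\bQ_p} H^1_f(G_p, V) = \dim_{\bQ_p} H^0(G_p, V) + \dim_{\bQ_p}\bigl(D_{\mathrm{dR}}(V)/\mathrm{Fil}^0\bigr), \]
valid for any de Rham representation $V$, to $V = W$: facts (i) and (ii) evaluate the right-hand side to $0 + \dim W = \dim W$.

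For part (b) I would invoke the Bloch--Kato global Euler characteristic formula (see \cite[\S5]{bellaicheBK} or \cite{BK90}):
\[ \dim H^1_f(\bQ, V) - \dim H^1_f(\bQ, V^{\vee}(1)) = \dim H^0(\bQ, V) - \dim H^0(\bQ, V^{\vee}(1)) - \dim H^0(G_{\bR}, V) + \dim_{\bQ_p}\bigl(D_{\mathrm{dR}}(V|_{G_p})/\mathrm{Fil}^0\bigr), \]
and apply it to $V = W$. Fact (iii) kills $H^1_f(\bQ, W^{\vee}(1))$ since $W^{\vee}(1)$ is Artin; fact (i) kills $H^0(\bQ, W)$; fact (ii) evaluates the $D_{\mathrm{dR}}$ term to $\dim W$; and $\dim H^0(G_{\bR}, W) = \dim W^{\sigma=1}$ by definition of $\sigma$. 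Rearranging yields the claimed identity. The main point of care is bookkeeping the signs in the global Euler characteristic formula (a sanity check with $V = \bQ_p(n)$ for small $n$ pins them down); everything else is a direct application of the three facts.
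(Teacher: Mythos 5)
Your proof is correct, and it reaches the same two dimension counts, but the local part takes a genuinely different route from the paper. For part (a) the paper uses Lemma~\ref{Selmer inf-res}\ref{item: inf-res local} to descend to a splitting field $L_p$, identifies $\rH^1_f(G_{L_p},\bQ_p(1))$ with $\widehat{\cO^\times_{L_p}}\otimes\bQ_p$ via the Kummer map, and takes $D_p$-invariants; you instead apply the general local Bloch--Kato dimension formula $\dim \rH^1_f(G_p,V)=\dim \rH^0(G_p,V)+\dim\bigl(D_{\mathrm{dR}}(V)/\mathrm{Fil}^0\bigr)$ directly to $W$, using that $W=A(1)$ with $A$ Artin forces $\rH^0(G_p,W)=0$ and $\mathrm{Fil}^0 D_{\mathrm{dR}}(W)=0$. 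Your version avoids having to track the $D_p$-equivariance of the Kummer identification (the one slightly delicate point in the paper's computation), at the cost of invoking the de Rham dimension formula rather than elementary Kummer theory. For part (b) the two arguments essentially coincide: your Fontaine--Perrin-Riou/Euler-characteristic formula is the paper's equation~\eqref{eq: Poitou-Tate} after substituting $\dim \rH^1_f(G_p,W)=\dim \rH^0(G_p,W)+\dim\bigl(D_{\mathrm{dR}}(W)/\mathrm{Fil}^0\bigr)$ (these agree here since $\rH^0(G_p,W)=0$), and your fact (iii) --- vanishing of $\rH^1_f(G_{\bQ},A')$ for Artin $A'$ via descent to a trivializing field and finiteness of the class group --- is exactly the paper's combination of Lemma~\ref{Selmer inf-res}\ref{item: inf-res global} with the vanishing of $\rH^1_f(G_L,\bQ_p)$. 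Your parenthetical claim that ``crystalline at $p$'' coincides with ``unramified at $p$'' for the trivial representation is correct (both pick out the homomorphisms killing $\cO_{L_w}^\times$ under local class field theory) and is what the cited exercise encodes, so there is no gap.
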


\begin{proof}
	By assumption, there is a finite Galois extension $L/\bQ$ with Galois group $G$ such that $N\coloneqq W^\vee(1)$ restricted to $G_L$ is the trivial representation $\bQ_p^d$, where $d=\dim N = \dim W$.
	Note that $W = N^\vee(1)$ is equal to $\bQ_p(1)^d$ when restricted to $G_L$.
	
	For \ref{item: AT Selmer local}, let $L_p=L\bQ_p$ and $D_p = \Gal(L_p/\bQ_p) \subset G$.
	Then part \ref{item: inf-res local} of \Cref{Selmer inf-res} yields $\rH^1_f(G_p, W) \cong \rH^1_f(G_{L_p},\bQ_p(1)^d)^{D_p}$.
	By \cite[Prop.\ 2.9]{bellaicheBK}, the Kummer map gives an
        identification of  $\rH^1_f(G_{L_p},\bQ_p(1))$ with $\widehat{\cO^\times_{L_p}}\otimes_{\bZ_p}\bQ_p$ in a $D_p$-equivariant way, so \ref{item: AT Selmer local} follows from 
	\[
	\rH^1_f(G_{L_p},\bQ_p(1)^d)^{D_p} = (\widehat{\bZ_p^\times}\otimes_{\bZ_p}\bQ_p)^d = \bQ_p^d.
	\]
	
	For \ref{item: AT Selmer global}, we use the following consequence of Poitou--Tate duality \cite[Fact~2.9]{BCL:effective}:
	Let $W$ be a geometric Galois representation, i.e.\ $W$ is unramified at almost all places and de Rham at $p$. Then
	\begin{align}
	\label{eq: Poitou-Tate}
	\dim \rH^1_f(G_{\bQ}, W) &= \dim \rH^0(G_{\bQ}, W) + \dim \rH^1_f(G_{\bQ}, W^\vee (1))\\
	& \quad - \dim \rH^0(G_{\bQ}, W^\vee (1)) + \dim \rH^1_f(G_p, W) - \dim W^{\sigma=1}, \notag
	\end{align}
	where~$\sigma$ is complex conjugation (for some embedding $\Qbar \hookrightarrow \bC$).
	The given $W$ is geometric as $W^\vee(1)$ is Artin.
	The first summand $\dim \rH^0(G_{\bQ}, W)$ vanishes as $W$ is pure of weight $-2$.
	The representation $W^\vee(1)$ is trivial when restricted to $G_L$, thus part \ref{item: inf-res global} of \Cref{Selmer inf-res} yields $\rH^1_f(G_\bQ, W^\vee(1)) \cong \rH^1_f(G_L,\bQ_p^d)^{G}$.
	But $\rH^1_f(G_L,\bQ_p) = 0$ by \cite[Exercise 2.24.a]{bellaicheBK}. 
	Thus \eqref{eq: Poitou-Tate} and \ref{item: AT Selmer local} imply \ref{item: AT Selmer global}.
\end{proof}

\begin{proof}[Proof of \Cref{thm: alpha}\ref{thm: depth 2 finiteness}]
	Let $U_Y \twoheadrightarrow U$ be the quotient from \Cref{depth 2 quotient}. By construction, its weight-graded pieces are given by
	\begin{align*}
		\gr_{-1}^W U &= V_p \Jac_X, \\
		\gr_{-2}^W U &= (\bQ_p\otimes\NS(\Jac_{X_{\Qbar}}))^\vee(1) \oplus \bQ_p(1)^{D(\Qbar)} / \bQ_p(1).
	\end{align*}
	We calculate the dimensions of their Selmer groups.
In weight~$-1$, \eqref{item: Tate module global} and \eqref{item: Tate module local} yield
	\begin{align}
		\label{eq: dimension weight -1 global}
		\dim \rH^1_f(G_{\bQ}, \gr_{-1}^W U) &= r_p,\\
		\label{eq: dimension weight -1 local}
		\dim \rH^1_f(G_p, \gr_{-1}^W U) &= g.
	\end{align}	
In weight $-2$, we use \Cref{Selmer dim AT} with $W=(\bQ_p\otimes\NS(\Jac_{X_{\Qbar}}))^\vee(1)$.
Then $W^\vee(1) = \bQ_p\otimes\NS(\Jac_{X_{\Qbar}})$ is an Artin representation because $\NS(\Jac_{X_{\Qbar}})$ is a finitely generated abelian group.
Note that $W^\vee(1)^{G_\bQ} = \bQ_p\otimes \NS(\Jac_X)$ and $W^{\sigma=1}=(\bQ_p\otimes\NS(\Jac_{X_{\Qbar}})^{\sigma=-1})^\vee(1)$.
Together with Lemma~\ref{lem: Galois cohomology of cuspidal inertia} this yields
\begin{align}
\label{eq: dimension weight -2 global AT}
\dim \rH^1_f(G_{\bQ}, \gr_{-2}^W U) &= n_1 + n_2 - \#|D| + \dim W - \rho_f,\\
\label{eq: dimension weight -2 local AT}
\dim \rH^1_f(G_p, \gr_{-2}^W U) &= n - 1 + \dim W.
\end{align}
By~\Cref{lem: Selmer scheme dimensions}, $\cY(\bZ_p)_{S,U}$ and thus~$\cY(\bZ_p)_{S,2}$ are finite whenever	
	\begin{align*}
		0 &< \sum_{k=1}^2 \bigl( \dim_{\bQ_p} \rH^1_f(G_p,
                \gr^W_{-k} U) - \rH^1_f(G_{\bQ}, \gr^W_{-k} U) \bigr) - s\\
		&= (g - r_p) + ((n-1+\dim W) - (n_1 + n_2 - \#|D| + \dim W
                - \rho_f)) -s \\
		&= g - r_p + \rho_f + \#|D| + n_2 -1 -s. & \qedhere
	\end{align*}
\end{proof}

\begin{proof}[Proof of \Cref{thm: beta}]
	Again let $U_Y \twoheadrightarrow U$ be the quotient from
        \Cref{depth 2 quotient}. Having calculated the Galois cohomology
        dimensions of its weight-graded pieces in Eqs.~\eqref{eq: dimension
        weight -1 global}--\eqref{eq: dimension weight -2 local AT}, the
        global and local Hilbert series associated to~$U$ are given by
	\begin{align*}
		\HS_{\glob}(t) &= (1-t)^{-r_p} (1-t^2)^{-(s + n_1 + n_2 - \#|D| + \dim W - \rho_f)} \\
		&= 1 + r_p t + (s + n_1 + n_2 - \#|D| + \dim W - \rho_f + \tfrac12 r_p (r_p + 1)) t^2 + \ldots , \\
		\HS_{\loc}(t) &= (1-t)^{-g} (1-t^2)^{-(\dim W + n -1)} \\
		&= 1 + gt + (\dim W + n - 1 + \tfrac12 g(g+1))t^2 + \ldots 
	\end{align*}
	If the coefficients satisfy the inequality $\sum_{i=0}^2 c_i^{\glob} < \sum_{i=0}^2 c_i^{\loc}$, then \Cref{lem: coleman function from Hilbert series} applies and yields the existence, for every reduction type~$\Sigma$, of a nonzero Coleman algebraic function of weight at most~2 vanishing on~$\cY(\bZ_p)_{S,U,\Sigma}$ and thus on $\cY(\bZ_p)_{S,2,\Sigma}$. Abbreviating $d = \dim W$, we have:
	\begin{align*}
		& \sum_{i=0}^2 c_i^{\glob} < \sum_{i=0}^2 c_i^{\loc} \\
		\Leftrightarrow \; & 1 + r_p  + (s + n_1 + n_2 - \#|D| + d - \rho_f + \tfrac12 r_p (r_p + 1)) < 1 + g + (d + n - 1 + \tfrac12 g(g+1)) \\
		\Leftrightarrow \; & 0 < \tfrac12 g(g+3) - \tfrac12 r_p
                (r_p+3) + \rho_f + \#|D| + n_2 - 1 - s. 
	\end{align*}
        This proves the existence statement 
        of \Cref{thm: beta}.

Finally, the claimed bound on $\#\cY(\bZ_p)_{S,2}$ is
the one obtained from \cite[Theorem~6.2.1\,B)]{betts:effective} for $m = 2$, noting that the term $c_1^{\loc}$ appearing in the general formula is equal to $g$ by the calculation of the local Hilbert series above. Note that Betts' result is actually stated as a bound on $\#\cY(\bZ_S)$ but the proof, which goes via bounding the number of zeros of a Coleman algebraic function of bounded weight, applies in fact to the superset $\cY(\bZ_p)_{S,U}$.
\end{proof}

\begin{rem}
	As mentioned in \Cref{rem: depth 1 bound}, the same method can be
        used to show the depth~1 analogue of \Cref{thm: beta}. For this one
        simply replaces the quotient $U_Y \twoheadrightarrow U$ constructed
        in \Cref{depth 2 quotient} with the depth~1 quotient $U_{Y,1} =
        U_Y^{\ab}$.
\end{rem}

\begin{rem}
One can prove weaker versions of Theorems \ref{thm: alpha}\ref{thm: depth 2 finiteness} and \ref{thm: beta}, with $\rho$ in place of $\rho_f$ in the definition of $\alpha_2(Y,s,p)$ and $\beta(Y,s,p)$, by constructing a coarser quotient $U'$ of $U_Y / W_{-3} U_Y$.
Namely, the irreducible representation~$\bQ_p(1)$ occurs as a direct summand of the semisimple \cite[Lemma~6.0.1]{betts:effective} Galois representation $\bigwedge^2 V_p \Jac_X$ with multiplicity given by
\[ \dim_{\bQ_p} \Hom_{G_{\bQ}}(\bigwedge^2 V_p \Jac_X, \bQ_p(1)). \]
This dimension is equal to the Picard number~$\rho$ \cite[Proof of Lemma~2.10]{BCL:effective}.
Forming the pushout as in \eqref{diag: def depth 2 quot}, we obtain the quotient $U'$ of $U_Y / W_{-3} U_Y$ with $\gr_{-2}^W(U') = \bQ_p(1)^\rho \oplus \bQ_p(1)^{D(\Qbar)} / \bQ_p(1)$.
Its Selmer dimensions in weight $-2$ are therefore
\begin{align*}
\dim \rH^1_f(G_{\bQ}, \gr_{-2}^W U') &= n_1 + n_2 - \#|D|,\\
\dim \rH^1_f(G_p, \gr_{-2}^W U') &= \rho + n - 1.
\end{align*}
Now one proceeds as before to prove the analogues of \Cref{thm: alpha}\ref{thm: depth 2 finiteness} and~\Cref{thm: beta}.
\end{rem}
\begin{rem}
  Note that~\cite[Proposition~2.2]{BD-II}, which we used as our
  starting point for the results of the present section,
  strengthens~\cite[Lemma~3.2]{BD18}, which has $\rho$ in place of $\rho_f$.
  In fact, Balakrishnan--Dogra state in~\cite[Remark~3.3]{BD18} that one
  can use the same method as in the proof of~\cite[Lemma~3.2]{BD18} to show
  finiteness of $\cY(\bZ_p)_{\varnothing,2}^{\mathrm{BD}}$ when (in our
  notation) $n=\#D(\bQ)=1$ 
  and $g-r+\rho>0$, but give no further details.
  One may view \Cref{thm:
  alpha}\ref{thm: depth 2 finiteness} as a generalisation of this.
\end{rem}

\section{The full weight \texorpdfstring{$\geq -2$}{≥−2} quotient}
\label{sec: weight geq -2 quotient}

Let us consider the case where we choose for $U$ the full weight~$\geq -2$ quotient of the fundamental group:
\[ U \coloneqq U_Y/ W_{-3} U_Y. \]
As in~\eqref{eq: weight -2 extension} above, the graded piece of weight~$-2$ is semisimple and isomorphic to a direct sum
\[ \gr_{-2}^W U = \gr_{-2}^W U_Y \cong \bigwedge^2 V_p \Jac_X \; \oplus \; \bQ_p(1)^{D(\Qbar)}/\bQ_p(1). \]
The dimensions of the local and global Galois cohomology of
$\bQ_p(1)^{D(\Qbar)}/\bQ_p(1)$ have been calculated in \Cref{lem: Galois
cohomology of cuspidal inertia}, so we focus on the first summand. The
dimension of the global cohomology group~$\rH^1_f(G_{\bQ}, \bigwedge^2 V_p
\Jac_X)$ involves the term $h_{\BK}$, defined in~\eqref{hBK}.
\begin{lemma}[Galois cohomology of wedge-squared Tate module]
	\label{lem: Galois cohomology of wedge square}
	\leavevmode
	\begin{enumerate}[label=(\alph*)]
		\item $\dim \rH^1_f(G_{\bQ}, \bigwedge^2 V_p \Jac_X) = \tfrac12 g(g+1)- \rho + h_{\BK}$,
		\label{item: wedge-square global}
		\item $\dim \rH^1_f(G_p, \bigwedge^2 V_p \Jac_X) = \tfrac12 g (3g-1)$.
		\label{item: wedge-square local}
	\end{enumerate}
\end{lemma}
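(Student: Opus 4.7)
For part (b), I would invoke the standard Bloch--Kato local dimension formula for a crystalline $G_p$-representation,
\[
\dim \rH^1_f(G_p, W) \;=\; \dim_{\bQ_p} W - \dim_{\bQ_p} \mathrm{Fil}^0 D_{\dR}(W) + \dim \rH^0(G_p, W),
\]
applied to $W = \bigwedge^2 V_p \Jac_X$, which is crystalline since $\cX_{\bF_p}$ is smooth. The term $\rH^0(G_p, W)$ vanishes by purity of weight $-2$ (Frobenius eigenvalues of absolute value $p$). The Hodge--Tate weights of $V_p \Jac_X$ are $\{0,-1\}$, each with multiplicity $g$, so those of $\bigwedge^2 V_p \Jac_X$ are $\{0,-1,-2\}$ with multiplicities $\binom{g}{2}$, $g^2$, $\binom{g}{2}$, giving $\dim_{\bQ_p} \mathrm{Fil}^0 D_{\dR}(W) = \binom{g}{2}$. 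Substitution yields $g(2g-1) - \binom{g}{2} = \tfrac12 g(3g-1)$.

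For part (a), my plan is to apply the Poitou--Tate formula \eqref{eq: Poitou-Tate} to the same $W$. Four of its five summands can be read off at once: $\dim \rH^0(G_{\bQ}, W) = 0$ again by purity; $\dim \rH^0(G_{\bQ}, W^\vee(1)) = \dim_{\bQ_p} \Hom_{G_{\bQ}}(\bigwedge^2 V_p \Jac_X, \bQ_p(1)) = \rho$, via the identification \eqref{eq: NS wedge2} specialised to $L=\bQ$; $\dim \rH^1_f(G_{\bQ}, W^\vee(1)) = h_{\BK}$ by definition; and $\dim \rH^1_f(G_p, W) = \tfrac12 g(3g-1)$ by (b).

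The remaining ingredient, which I expect to be the only subtle step, is computing $\dim W^{\sigma=1}$. The key observation is that the Weil pairing $\omega\colon V_p \Jac_X \otimes V_p \Jac_X \to \bQ_p(1)$ is a non-degenerate $G_{\bQ}$-equivariant symplectic form, and since complex conjugation acts on $\bQ_p(1)$ as $-1$, the involution $\sigma$ on $V_p \Jac_X$ is \emph{antisymplectic}: $\sigma^*\omega = -\omega$. Both eigenspaces $V^{\pm}$ of $\sigma$ are therefore isotropic for $\omega$; since their direct sum is all of $V_p\Jac_X$ of dimension $2g$, each is forced to be Lagrangian of dimension~$g$. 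Decomposing $\bigwedge^2(V^+\oplus V^-) = \bigwedge^2 V^+ \oplus (V^+\otimes V^-) \oplus \bigwedge^2 V^-$, with $\sigma$ acting as $+1, -1, +1$ on the three summands respectively, gives $\dim W^{\sigma=+1} = 2\binom{g}{2} = g(g-1)$.

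Feeding all five quantities into \eqref{eq: Poitou-Tate} then yields
\[
\dim \rH^1_f(G_{\bQ}, \bigwedge^2 V_p \Jac_X) \;=\; h_{\BK} - \rho + \tfrac12 g(3g-1) - g(g-1) \;=\; \tfrac12 g(g+1) - \rho + h_{\BK},
\]
as claimed. The main potential pitfall is the sign convention in the Weil pairing argument, since getting $\sigma$ symplectic rather than antisymplectic would change the dimension of $V^+$ and hence the final answer.
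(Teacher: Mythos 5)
Your proposal is correct, and part~(a) follows the paper's proof essentially verbatim: the same five-term Poitou--Tate formula \eqref{eq: Poitou-Tate}, the same identification of $\dim\rH^0(G_\bQ, W^\vee(1))$ with $\rho$, and the same value $\dim W^{\sigma=1}=g(g-1)$. Your justification of that last value (the Weil pairing makes $\sigma$ antisymplectic, forcing both eigenspaces to be Lagrangian of dimension $g$) is an equivalent repackaging of the paper's argument that $V_p\Jac_X\cong(V_p\Jac_X)^\vee(1)$ forces the trivial and sign representations of $G_\infty$ to occur with equal multiplicity; both hinge on $\sigma$ acting by $-1$ on $\bQ_p(1)$, and both then read off $g(g-1)$ from the decomposition of $\bigwedge^2(V^+\oplus V^-)$.

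Part~(b) is where you genuinely diverge. The paper does not compute the local $\rH^1_f$ of $\bigwedge^2 V_p\Jac_X$ directly: it quotes the closed-form local Hilbert series $\prod_k(1-t^k)^{-\dim\rH^1_f(G_p,\gr^W_{-k}U_Y)}=(1-gt)/(1-2gt-(n-1)t^2)$ from \cite[Lemma~2.6]{BCL:effective}, extracts $d_2=\tfrac12 g(3g-1)+n-1$ by comparing quadratic coefficients, and subtracts the cuspidal contribution $n-1$ from \Cref{lem: Galois cohomology of cuspidal inertia}. You instead apply the Bloch--Kato tangent-space formula $\dim\rH^1_f(G_p,W)=\dim W-\dim\mathrm{Fil}^0 D_{\dR}(W)+\dim\rH^0(G_p,W)$ and count Hodge filtration jumps on $\bigwedge^2$; your arithmetic checks out ($g(2g-1)-\binom{g}{2}=\tfrac12 g(3g-1)$, consistent with the normalisation $\dim\mathrm{Fil}^0 D_{\dR}(V_p\Jac_X)=g$ that reproduces \eqref{item: Tate module local}), and the vanishing of $\rH^0$ by weight purity is fine. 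Your route is more self-contained and makes the answer visibly independent of the cuspidal data, whereas the paper's route recycles a formula it needs anyway for the Hilbert-series bounds in \Cref{thm: beta} and \Cref{full weight -2 coleman function}, at the cost of an indirect subtraction. Either argument is acceptable.
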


\begin{proof}
	We start with the local dimension. By \cite[Lemma~2.6]{BCL:effective}, the local Hilbert series of $Y$ is given by
	\[ \prod_{k=1}^\infty (1-t^k)^{- \dim \rH^1_f(G_p, \gr_{-k}^W U_Y)} = \frac{1-gt}{1-2gt-(n-1)t^2}. \]
	Expanding the power series up to the quadratic terms yields
	\[ 1 + d_1 t + (\tfrac12 d_1 (d_1+1) + d_2)t^2 + \ldots = 1 + gt + (2g^2 + n - 1)t^2+ \ldots, \]
	where $d_k \coloneqq \dim \rH^1_f(G_p, \gr_{-k}^W U_Y)$. Comparing coefficients yields 
	\[ d_1 = g, \quad d_2 = \tfrac12 g (3g-1) + n - 1.  \]
	Since $\gr^W_{-2} U_Y$ is a direct sum of $\bigwedge^2 V_p \Jac_X$ and $\bQ_p(1)^{D(\Qbar)}/\bQ_p(1)$, and the dimension of the local cohomology of the latter is $n-1$ by \Cref{lem: Galois cohomology of cuspidal inertia}, (b) follows.
	
	For the global cohomology, let $W \coloneqq \bigwedge^2 V_p \Jac_X$. We use \eqref{eq: Poitou-Tate} and go through the summands one by one. The first summand $\rH^0(G_{\bQ}, W)$ vanishes since~$W$ is pure of weight~$-2$. The second summand is precisely~$h_{\BK}$ since $W^\vee (1) = \Hom(W, \bQ_p(1))$. The third is
	\[ \dim \rH^0(G_{\bQ}, W^\vee (1)) = \dim \Hom(\bigwedge^2 V_p \Jac_X, \bQ_p(1))^{G_{\bQ}} = \rho, \]
	which we already used in the proof of \Cref{depth 2 quotient}. The
        local Galois cohomology $\rH^1_f(G_p, W)$ has dimension~$\tfrac12
        g(3g-1)$, as we just proved. Finally, consider $W$ as a
        representation of $G_{\infty} = \langle \sigma \rangle$. The two
        irreducible representations of $G_{\infty}$ 
        are the trivial representation~$\one$ and the sign representation, which we denote by~$\xi$. The isomorphism $V_p \Jac_X \cong (V_p \Jac_X)^\vee(1)$ given by the Weil pairing implies that the trivial representation and the sign representation appear in $V_p \Jac_X$ with equal multiplicity, so we have 
	\[ V_p \Jac_X \cong g\cdot\one \oplus g \cdot \xi \]
	as a $G_\infty$-representation, which implies
	\[ \bigwedge^2 V_p \Jac_X = g(g-1)\cdot \one \oplus g^2 \cdot \xi. \]
	In particular, 
	\begin{align}\label{eq:cc on wedge2}
		\dim W^{\sigma} = g(g-1).
	\end{align}
	Putting everything together in~\eqref{eq: Poitou-Tate} yields
	\begin{align*}
		\dim \rH^1_f(G_{\bQ}, \bigwedge^2 V_p \Jac_X) &= 0 + h_{\BK} - \rho + \tfrac12 g(3g-1) - g(g-1) \\
		&= \tfrac12 g(g+1) - \rho + h_{\BK},
	\end{align*}
	as claimed.
\end{proof}

Using these calculations, we can apply the general theory of Betts to the
quotient $U = U_Y/W_{-3} U_Y$ to obtain a finiteness criterion and a criterion for the existence of weight~2 Coleman functions vanishing on depth~2 Chabauty--Kim loci.

\begin{proof}[Proof of \Cref{full weight -2 finiteness}]
	The weight~$\geq -2$ quotient $U \coloneqq U_Y/W_{-3} U_Y$ is a
        quotient of $U_{Y,2}$, so we have $\cY(\bZ_p)_{S,2} \subseteq
        \cY(\bZ_p)_{S,U}$ and it suffices to show finiteness of the latter.
        The weight-graded pieces of $U$ are given by
	\begin{align*}
		\gr^W_{-1} U &= \gr^W_{-1} U_Y =  V_p \Jac_X,\\
		\gr^W_{-2} U &= \gr^W_{-2} U_Y = \bigwedge^2 V_p \Jac_X \; \oplus \; \bQ_p(1)^{D(\Qbar)}/\bQ_p(1).
	\end{align*}
	The dimensions of their local and global Galois cohomology follow
        from \eqref{item: Tate module global},~\eqref{item: Tate module
        local}, 
        Lemma~\ref{lem: Galois cohomology of cuspidal inertia} and Lemma~\ref{lem: Galois cohomology of wedge square}. In weight~$-1$ they are given by~$r_p$ (global) and $g$ (local), as above. In weight~$-2$ they are given by
	\begin{align}
		\label{eq: dimension full weight -2 global}
		\dim \rH^1_f(G_{\bQ}, \gr^W_{-2} U) &= \tfrac12 g(g+1) - \rho + h_{\BK} + n_1 + n_2 - \#|D|,\\
		\label{eq: dimension full weight -2 local}
		\dim \rH^1_f(G_p, \gr^W_{-2} U) &= \tfrac12 g(3g-1) + n -1.
	\end{align}
	By \Cref{lem: Selmer scheme dimensions}, the set $\cY(\bZ_p)_{S,U}$ is finite if
	\begin{align*}
		0 &< \sum_{k=1}^2 \bigl( \dim_{\bQ_p} \rH^1_f(G_p,
                \gr^W_{-k} U) - \rH^1_f(G_{\bQ}, \gr^W_{-k} U) \bigr) - s \\
		&= (g - r_p) + ((\tfrac12 g(3g-1) + n -1) - (\tfrac12
                g(g+1) - \rho + h_{\BK} + n_1 + n_2 - \#|D|)) - s \\
		&= g^2 - r_p + \rho + \#|D| + n_2 - 1 - h_{\BK} - s. \qedhere
	\end{align*}
\end{proof}

\begin{proof}[Proof of~\Cref{full weight -2 coleman function}]
	Choose $U \coloneqq U_Y/W_{-3} U_Y$ as above. Having calculated the dimensions of global and local cohomology of its weight-graded pieces in Eqs.~\eqref{eq: dimension weight -1 global}--\eqref{eq: dimension weight -1 local} and Eqs.~\eqref{eq: dimension full weight -2 global}--\eqref{eq: dimension full weight -2 local}, the associated Hilbert series can be calculated as follows:
	\begin{align*}
		\HS_{\glob}(t) &= (1-t)^{-r_p} (1-t^2)^{-(s + \tfrac12 g(g+1) - \rho + h_{\BK} + n_1 + n_2 - \#|D|)} \\
		&= 1 + r_p t + \bigl(\tfrac12 r_p(r_p+1) + s + \tfrac12 g(g+1) - \rho \\
		& \qquad + h_{\BK} + n_1 + n_2 - \#|D|\bigr)t^2 + \ldots,\\
		\HS_{\loc}(t) &= (1-t)^{-g} (1-t^2)^{-(\tfrac12 g(3g-1) + n -1)} \\
		&= 1 + gt + (\tfrac12 g(g+1) + \tfrac12 g(3g-1) + n -1)t^2 + \ldots \\
		&= 1 + gt + (2g^2 + n - 1)t^2 + \ldots
	\end{align*}
	Let $c_i^{\glob}$ and $c_i^{\loc}$ be the respective coefficients. By \Cref{lem: coleman function from Hilbert series}, for every reduction type~$\Sigma$ there exists a Coleman algebraic function of weight at most 2 that vanishes on $\cY(\bZ_p)_{S,U,\Sigma}$, whenever the following inequality holds:
	\begin{align*}
		&\sum_{i=0}^2 c_i^{\glob} < \sum_{i=0}^2 c_i^{\loc} \\
		\Leftrightarrow \; & 1 + r_p + \tfrac12 r_p(r_p+1) + s + \tfrac12 g(g+1) - \rho + h_{\BK} + n_1 + n_2 - \#|D| \\
		& \qquad < 1 + g + 2g^2 + n - 1 \\
		\Leftrightarrow \; & 0 < \tfrac 12 g(3g+1) - \tfrac12
                r_p(r_p+3) + \rho + \#|D| + n_2 -1 - s - h_{\BK}.
	\end{align*}
	Finally, the bound on $\#\cY(\bZ_p)_{S,2}$ from
        \cite[Theorem~6.2.1\,B)]{betts:effective} with $m=2$ depends on~$U$
        only through the Hilbert series coefficient $c_1^{\loc} = g$, so we
        get the same bound as in \Cref{thm: beta}. Here, as in the proof of \Cref{thm: beta} above, we are using the fact that Betts' bound on $\#\cY(\bZ_S)$ does in fact apply to the superset $\cY(\bZ_p)_{S,2}$.
\end{proof}


\printbibliography

\end{document}